\newtheorem{assumption}{Assumption}
\DeclareMathOperator*{\argmin}{arg\,min}
\title{
Distributed Learning for Cooperative Inference}
\author{Angelia Nedi\'{c} \and Alex Olshevsky \and C\'{e}sar A.\ Uribe$^\dagger$\thanks{$\dagger$This research is supported partially by the National Science Foundation under
		grant no.\  CPS 15-44953 and by the Office of Naval Research under grant
		no.\ N00014-17-1-2195. A preliminary version of some results presented in this paper were  published in the IEEE Conference of Decision and Control 2016 \cite{ned16d}. } }
\institute{A.\ Nedi\'{c} \at
	ECEE Department, Arizona State University \\
	\email{angelia.nedich@asu.edu}           
	\and
	A.\ Olshevsky \at
	Department of ECE and Division of Systems Engineering, Boston University \\
	\email{alexols@bu.edu}  
	\and 
	C.A.\ Uribe \Letter\at
	ECE Department and Coordinated Science Laboratory, University of Illinois \\
	\email{cauribe2@illinois.edu}  
}
\begin{document}
\maketitle
\begin{abstract}
We study the problem of cooperative inference where a group of agents  interact over a network and seek to estimate a joint parameter that best explains a set of observations. Agents do not know the network topology or the observations of other agents. We explore a variational interpretation of the Bayesian posterior density, and its relation to the stochastic mirror descent algorithm, to propose a new distributed learning algorithm. We show that, under appropriate assumptions, the beliefs generated by the proposed algorithm concentrate around the true parameter exponentially fast. We provide explicit non-asymptotic bounds for the convergence rate. Moreover, we develop explicit and computationally efficient algorithms for observation models belonging to exponential families.
\end{abstract}

\section{Introduction}

The increasing amount of data generated by recent applications of distributed systems such as social media, sensor networks, and cloud-based databases has brought considerable attention to distributed data processing approaches, in particular the design of distributed algorithms that take into account the communication constraints and make coordinated decisions in a distributed manner~\cite{jad12,rah10,ala04,olf06,aum76,bor82,tsi84,gen86,coo90,deg74,gil93}. In a distributed system, the interactions between agents are usually restricted to follow certain constraints on the flow of information imposed by the network structure. Such information constraints cause the agents to only be able to use locally available information. This contrasts with centralized approaches where all information and computation resources are available at a single location \cite{gub93,zhu05,vis97,sun04}.  
 
One traditional problem in decision-making is that of parameter estimation or statistical learning. Given a set of noisy observations coming from a joint distribution one would like to estimate a parameter or distribution that minimizes a certain loss function. For example, Maximum a Posteriori (MAP) or 
Minimum Least Squared Error (MLSE) estimators fit a parameter to some model of the observations. Both, MAP and MLSE estimators require some form of Bayesian posterior computation based on models that explain the observations for a given parameter. Computation of such a posteriori distributions depends on having exact models about the likelihood of the corresponding observations. This is  one of the main difficulties of using Bayesian approaches in a distributed setting.  A fully Bayesian approach is not possible because full knowledge of the network structure, or of other agents' likelihood models, may not be available~\cite{gal03,mos10,ace11}.
 
Following the seminal work of Jadbabaie et al.\ in \cite{jad12,jad13,sha13}, there have been many studies of distributed non-Bayesian update rules over networks. In this case, agents are assumed to be boundedly rational (i.e., they fail to aggregate information in a fully Bayesian way \cite{gol10}). Proposed non-Bayesian algorithms involve an aggregation step, typically consisting of weighted geometric or arithmetic average of the received beliefs~\cite{ace08,tsi84,jad03,ned13,ols14}, and a Bayesian update with the locally available data~\cite{ace11,mos14}. 
Recent studies proposed variations of the non-Bayesian approach and proved consistent, geometric and non-asymptotic convergence rates for a  general class of distributed algorithms; from asymptotic analysis \cite{sha13,lal14,qip11,qip15,sha15,rah15} to non-asymptotic bounds \cite{sha14,ned15,lal14b,ned14}, time-varying directed graphs \cite{ned15b}, and transmission and node failures \cite{su16}; see \cite{bar13,ned16c} for an extended literature review.  
  
We build upon the work in~\cite{bir15} on non-asymptotic behaviors of Bayesian estimators to derive new non-asymptotic concentration results for distributed learning algorithms. In contrast to the existing results which assume a finite hypothesis set, in this paper we extend the framework to countably many and a continuum of hypotheses. Our results show that in general, the network structure will induce a transient time after which all agents learn at a network independent rate, and this rate is geometric. 
 
The contributions of this paper are as follows. We begin with a variational analysis of Bayesian posterior and derive an optimization problem for which the posterior is a step of the Stochastic Mirror Descent method. We then use this interpretation to propose a distributed Stochastic Mirror Descent method for distributed learning.  We show that this distributed learning algorithm concentrates the beliefs of all agents around the true parameter at an exponential rate. We derive high probability non-asymptotic bounds for the convergence rate. In contrast to the existing literature, we analyze the case where the parameter spaces are compact. Moreover, we specialize the proposed algorithm to parametric models of an exponential family which results in especially simple updates.
 
The rest of this paper is organized as follows. Section \ref{sec:setup} introduces the problem setup, it describes the networked observation model and the inference task. Section \ref{sec:variational} presents a variational analysis of the Bayesian posterior, shows the implicit representation of the posterior as steps in a stochastic program and extends this program to the distributed setup. Section \ref{sec:inference} specializes the proposed distributed learning protocol to the case of observation models that are members of the exponential family. Section \ref{sec:concentration} shows our main results about the exponential concentration of beliefs around the true parameter.
Section \ref{sec:concentration} begins by gently introducing our techniques by proving a concentration result in the case of countably many hypotheses, before turning to our main focus: the case when the set of hypotheses is a compact subset of $\mathbb{R}^d$.  Finally, conclusions, open problems, and potential future work are discussed.

\textbf{\textit{Notation}}: 
Random variables are denoted with upper-case letters, e.g. $X$, 
while the corresponding lower-case are used for their realizations, e.g. $x$. 
Time indices are denoted by subscripts, and the letter $k$ or $t$ is generally used. 
Agent indices are denoted by superscripts, and the letters $i$ or $j$ are used. 
We write $[A]_{ij}$ or $a_{ij}$ to denote the entry of a matrix $A$ in its $i$-th row and $j$-th column. 
We use $A'$ for the transpose of a matrix $A$, and $x'$ for the transpose of a vector $x$. 
The complement of a set $B$ is denoted as $B^c$. 
 
 \section{Problem Setup}\label{sec:setup}
 
We begin by introducing the learning problem from a centralized perspective, where all information is available at a single location. Later, we will generalize the setup to the distributed setting where only partial and distributed information is available. 
 
 Consider a probability space $(\Omega,\mathcal{F},\mathbb{P})$, where $\Omega$ is a sample space, $\mathcal{F}$ is a $\sigma$-algebra and $\mathbb{P}$ a probability measure. Assume that we observe a sequence of independent random variables $X_1,X_2,\hdots$, all taking values in some measurable space 
 $(\mathcal{X},\mathcal{A})$ and identically distributed with a common \textit{unknown} distribution $P$. In addition, we have a parametrized family of distributions ${{\mathscr{P} = \{P_{\theta} : \theta \in \Theta\}}}$,where the map $\Theta \to \mathscr{P}$ from parameter to distribution is one-to-one. Moreover, 
 the models in $\mathscr{P}$ are all dominated\footnote{A measure $\mu$ is dominated by (or absolutely continuous with respect to) a measure $\lambda$ if $\lambda(B) = 0$ implies $\mu(B)=0$ for every measurable set~$B$.} by a $\sigma$-finite measure $\lambda$, with corresponding densities $p_\theta = dP_\theta / d\lambda$. Assuming that there exists a $\theta^*$ such that $P_{\theta^*} = P$, the objective is to estimate $\theta^*$ based on the received observations $x_1,x_2,\hdots$.
 
 Following a Bayesian approach, we begin with a prior on  $\theta^*$ represented as a distribution on the space $\Theta$; then given a sequence of observations, we incorporate such knowledge into a posterior distribution following Bayes' rule. Specifically, we assume that $\Theta$ is equipped with a $\sigma$-algebra and a measure $\sigma$ and that $\mu_0$, which is our prior belief, is a probability measure on $\Theta$ which is dominated by $\sigma$. Furthermore, the densities $p_{\theta}(x)$ are measurable functions of $\theta$ for any 
 $x \in \mathcal{X}$, and also dominated by $\sigma$. We then define the belief $\mu_k$  as the posterior distribution given the sequence of observations up to time $k$, i.e.,
 \begin{align}\label{bayes}
 \mu_{k+1}(B) & \propto \int_{B} \prod\limits_{t=1}^{k+1} p_\theta(x_{t}) d\mu_0(\theta) .
 \end{align} for any measurable set $B \subset \Theta$ (note that we used the independence of the observations at each time step).
Assuming that all observations are readily available at a centralized location, under appropriate conditions, the recursive Bayesian posterior in Eq.~\eqref{bayes} will be consistent in the sense that the beliefs $\mu_k$ will concentrate around $\theta^*$; see \cite{gho97,sch65,gho00} for a formal statement. Several authors have studied the rate at which this concentration occurs, in both asymptotic and non-asymptotic regimes \cite{bir15,gho07,riv12}. 
 
Now consider the case where there is a network of $n$ agents observing the process $X_1,X_2,\hdots$, where $X_k$ is now a random vector belonging to the product space $\prod_{i=1}^{n}\mathcal{X}^i$, and  
$X_k = [X^1_k,X^2_k,\hdots,X^n_k ]'$ consists of observations $X_k^i$ of the agents at time~$k$. 
Specifically, agent $i$  observes the sequence $X_1^i,X_2^i,\hdots$,  
where $X_k^i$ is now distributed according to an unknown distributions $P^i$. 
Each agent agent $i$ has a private family of distributions ${{\mathscr{P}^i = \{P_{\theta}^i : \theta \in \Theta\}}}$ it would like to fit to the observations.
However, the goal is for {\em all} agents to agree on a {\em single} $\theta$ that best explains the complete set of observations. In other words, the agents collaboratively seek to find a $\theta^*$ that makes the distribution $\boldsymbol{P}_{\theta^*} = \prod_{i=1}^{n} P^i_{\theta^*} $ as close as possible to the unknown true distribution $P = \prod_{i=1}^n P^i$.  Agents interact over a network defined by an undirected graph $\mathcal{G} = (V,E)$, where $V = \{1,2,\ldots,n\}$ is the set of agents and $E$ is a set of undirected edges,
i.e., $\left(i,j\right) \in E$ if and only if agents $i$ and $j$ can communicate with each other. 

We study a simple interaction model where, at each step, agents exchange their beliefs with their neighbors in the graph. Thus at every time step $k$,  agent $i$ will receive the sample $x_{k}^i$ from $X_k^i$ as well as the beliefs of its neighboring agents, i.e., it will receive $\mu_{k-1}^j$ for all $j$ such that $(i,j) \in E$.  Applying a fully Bayesian approach runs into some obstacles in this setting, as agents know neither the network topology nor the private family of distributions of other agents.
Our goal is to design a learning procedure which is both distributed and consistent. That is, we are interested in a belief update algorithm that aggregates information in a non-Bayesian manner and guarantees that the beliefs of all agents will concentrate around $\theta^*$. 
 
As a motivating example,  consider the problem of distributed source localization \cite{rab04,rab05}. In this scenario, a network of $n$ agents receives noisy measurements of the distance to a source. The sensing capabilities of each sensor might be limited to a certain region. 
The group objective is to jointly identify the location of the source. Figure \ref{location} shows a group of $7$ agents (circles) seeking to localize a source (star). 
There is an underlying graph that indicates which nodes can exchange messages.  
Moreover, each node has a sensing region indicated by the dashed circle around it. 
Each agent observes signals proportional to the distance to the target. Since a target cannot be localized effectively from a single measure of the distance, agents must cooperate to have any hope of achieving decent localization. For more details on the problem, as well as simulations of the several discrete learning rules, 
we refer the reader to our earlier paper~\cite{ned15} dealing with the case when the set $\Theta$ is finite.

\begin{figure}[h]
	\centering
	\includegraphics[width=0.3\textwidth]{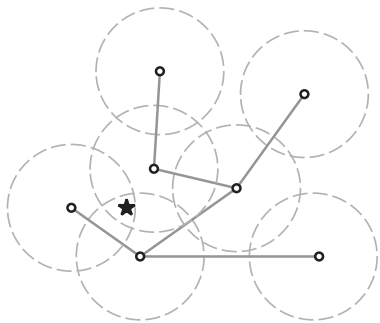}
	\caption{Distributed source localization example.}
	\label{location}
\end{figure} 

  \section{A variational approach to distributed Bayesian filtering}\label{sec:variational}

In this section, we make the observation that the posterior in Eq.~\eqref{bayes} corresponds to an iteration of a first-order optimization algorithm, namely Stochastic Mirror Descent~\cite{bec03,ned14b,dai15,rab15}. Closely related variational interpretations of Bayes' rule are well-known, and in particular have been given in  \cite{zel88,wal06,hil12}.  The specific connection to Stochastic Mirror Descent has not been noted, as far as we are aware of.  This connection will serve to motivate a distributed learning method which will be the main focus of the paper.

\vspace{-0.4cm}
\subsection{Bayes' rule as Stochastic Mirror Descent} Suppose we want to solve the following optimization problem 
\begin{align}\label{central_dk}
\min_{\theta \in \Theta} F(\theta) & =  D_{KL}(P\|P_\theta),
\end{align}
where $P$ is an unknown true distribution and $P_{\theta}$ is a parametrized family of distributions (see 
Section~\ref{sec:setup}). Here, $D_{KL}(P\|Q)$ is the Kullback-Leibler (KL) divergence\footnote{$D_{KL}(P\|Q)$  between distributions $P$ and $Q$ (with $P$ dominated by $Q$)  is defined to be
	$D_{KL}(P\|Q)  = - \mathbb{E}_P \left[\log dQ / dP\right].$
	} between distributions $P$ and $Q$. 

First note that we can rewrite Eq.~\eqref{central_dk} as
\begin{align*}
	\min_{\theta \in \Theta} D_{KL}(P\|P_\theta) & = \min_{\pi \in \Delta_{\Theta} } \mathbb{E}_{\pi} D_{KL}(P\|P_\theta)  \text{ \qquad   s.t. } \theta\sim \pi \nonumber \\
	& = \min_{\pi \in \Delta_{\Theta} } \mathbb{E}_{\pi} \mathbb{E}_{P} \left[-\log \frac{dP_{\theta}}{dP}\right],
	\end{align*}
	where $\Delta_{\Theta}$ is the set of all possible densities on the parameter space $\Theta$. Since the distribution $P$ does not depend on the parameter $\theta$, it follows that
\begin{align}\label{central_expectation}	
	\argmin_{\theta \in \Theta} D_{KL}(P\|P_\theta)& = \argmin_{\pi \in \Delta_{\Theta} } \mathbb{E}_{\pi} \mathbb{E}_{P} \left[-\log p_{\theta}(X)\right] \text{ \  where \ } \theta\sim \pi \text{\  and \ } X \sim P \nonumber \\
	& = \argmin_{\pi \in \Delta_{\Theta} } \mathbb{E}_{P} \mathbb{E}_{\pi} \left[-\log p_{\theta}(X)\right] \text{ \  where \ } \theta\sim \pi \text{\  and \ } X \sim P .
\end{align}

 The equality in Eq.~\eqref{central_expectation}, where we exchange the order of the expectations, follows from the Fubini-Tonelli theorem. Clearly, if $\theta^*$ minimizes Eq.~\eqref{central_dk}, then a distributions which puts all the mass on $\theta^*$ minimizes  Eq.~\eqref{central_expectation}.

The difficulty in evaluating the objective function in Eq.~\eqref{central_expectation} lies in the fact that the distribution $P$ is unknown. A generic approach to solving such problems is using algorithms from stochastic approximation methods, where the objective is minimized by constructing a sequence of gradient-based iterates whereby the true gradient of the objective (which is not available) is replaced with a gradient sample that is available at a given time.

A particular method that is relevant for the solution of stochastic programs of the form 
\begin{align*}
\min_{x\in Z} \mathbb{E}\left[F(x,\Xi)\right],
\end{align*} 
for some random variable $\Xi$ with unknown distribution, is  the \textit{stochastic mirror descent} method \cite{jud08,ned14b,bec03,lan12}. The stochastic mirror descent approach constructs a sequence $\{x_k\}$ 
as follows:
\begin{align*}
x_{k+1} & =  \argmin_{x \in Z} \left\lbrace \langle \nabla F(x,\xi_k), x\rangle 
+ \frac{1}{\alpha_{k}} D_w(x,x_k)\right\rbrace ,
\end{align*}
for a realization $\xi_k$ of $\Xi$. Here, $\alpha_k>0$ is the step-size,
 $\langle p, q \rangle = \int_{\Theta} p(\theta) q(\theta) d \sigma$,  and $D_w(x,x_k)$ is a Bregman distance function associated with a distance-generating function $w$, i.e., 
\begin{align*}
	D_w(x,z) = w(z)-w(x) - \delta w[z; x-z],
	\end{align*} where $ \delta w[z; x-z]$ is the Fr\'{e}chet derivative of $w$ at $z$ in the direction of $x-z$. 

For Eq.~\eqref{central_expectation}, Stochastic Mirror Descent generates a sequence of densities $\{d\mu_k\}$, as follows:
\begin{align}\label{central}
d\mu_{k+1} & =  \argmin_{\pi \in \Delta_{\Theta}} \left\lbrace \langle - \log p_\theta(x_{k+1}), \pi\rangle 
+ \frac{1}{\alpha_{k}} D_w(\pi,d\mu_k)\right\rbrace ,  \qquad \text{where } \theta \sim \pi.
\end{align} If we choose $w(x) = \int x \log x$ as the distance-generating function, then the corresponding Bregman distance is the Kullback-Leibler (KL) divergence $D_{KL}$. Additionally, by selecting $\alpha_k=1$, the solution to the optimization problem in Eq.~\eqref{central} can be computed explicitly, where for each $\theta \in \Theta$,
\begin{align*}
d\mu_{k+1}(\theta) &  \propto  p_\theta(x_{k+1}) d\mu_k(\theta),
\end{align*}
which is the particular definition for the posterior distribution according to Eq.~\eqref{bayes} 
(a formal proof of this assertion is a special case of Proposition~\ref{mirror} shown later in the paper). 

\subsection{Distributed Stochastic Mirror Descent} 
Now, consider the distributed problem where the network of agents want to collectively solve the following optimization problem
\begin{align}\label{opt_problem}
\min_{\theta \in \Theta} F(\theta) & \triangleq D_{KL}\left(\boldsymbol{P}\|\boldsymbol{P}_{\theta}\right)  = \sum\limits_{i=1}^n D_{KL}(P^i\|P^i_{\theta}) .
\end{align}

Recall that the distribution $\boldsymbol{P}$ is unknown (though, of course, agents gain information about it by observing samples from $X_1^i, X_2^i, \ldots$ and interacting with other agents) and that $\mathscr{P}^i$ containing all the distributions $P_{\theta}^i$ is a private family of distributions and  is only available to agent $i$. 

We propose the following algorithm as a distributed version of the stochastic mirror descent for the solution of problem Eq.~\eqref{opt_problem}:
\begin{align}\label{distributed}
d\mu_{k+1}^i & =  \argmin_{\pi \in \Delta_{\Theta}} \Big\{\langle - \log p_\theta^i(x_{k+1}^i), \pi\rangle 
+ \sum\limits_{j=1}^{n} a_{ij} D_{KL}(\pi\|d\mu_k^j) \Big\} \qquad \text{where } \theta \sim \pi,
\end{align}
with $a_{ij}>0$ denoting the weight that agent $i$ assigns to beliefs coming from its neighbor $j$. Specifically,
$a_{ij}>0$ if $(i,j)\in E$ or $j=i$, and $a_{ij}=0$ if $(i,j)\notin E$.
The optimization problem in Eq.~\eqref{opt_problem} has a closed form solution. 
In particular, the posterior density at each $\theta \in \Theta$ is given by
\begin{align*}
d\mu_{k+1}^i(\theta)  & \propto p_{\theta}^i(x^i_{k+1})  \prod_{j=1}^{n}(d\mu_{k}^j(\theta))^{a_{ij}}, 
\end{align*}
or equivalently, the belief on a measurable set $B$ of an agent $i$ at time $k+1$ is
\begin{align}\label{protocol}
 \mu_{k+1}^i(B) & \propto \int_{B}  p_\theta^i(x_{k+1}^i) \prod_{j=1}^{n} (d\mu_k^j(\theta))^{a_{ij}} .
\end{align}

We state the correctness of this claim in the following proposition. 

\begin{proposition}\label{mirror}
	The probability measure $\mu_{k+1}^i$ over the set $\Theta$ defined by the update protocol Eq.~\eqref{protocol} coincides, almost everywhere, with the update 
the distributed stochastic mirror descent algorithm applied to the optimization problem in Eq.~\eqref{opt_problem}.
\end{proposition}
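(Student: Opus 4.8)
The plan is to solve the variational problem defining $d\mu_{k+1}^i$ in Eq.~\eqref{distributed} in closed form and to check that its minimizer is exactly the density in Eq.~\eqref{protocol}. Write $m_k^j \triangleq d\mu_k^j/d\sigma$ for the $\sigma$-density of agent $j$'s belief; an induction on $k$ starting from $\mu_0 \ll \sigma$ shows that each $m_k^j$ is a well-defined density and that the objective below is finite on a nonempty subset of $\Delta_\Theta$. Recalling that the Bregman divergence generated by $w(x)=\int x\log x$ is the KL divergence, the functional to minimize over $\pi \in \Delta_\Theta$ is
\[
J(\pi) \;=\; \int_\Theta \big(-\log p_\theta^i(x_{k+1}^i)\big)\,\pi(\theta)\,d\sigma(\theta) \;+\; \sum_{j=1}^{n} a_{ij}\int_\Theta \pi(\theta)\log\frac{\pi(\theta)}{m_k^j(\theta)}\,d\sigma(\theta),
\]
subject to $\pi \ge 0$ and $\int_\Theta \pi\,d\sigma = 1$.

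The cleanest route is a "completing the square" identity rather than an explicit Lagrange computation. Using that the weights are row-stochastic, $\sum_{j=1}^{n} a_{ij}=1$, the two sums combine and
\[
J(\pi) \;=\; \int_\Theta \pi(\theta)\log\pi(\theta)\,d\sigma(\theta) \;-\; \int_\Theta \pi(\theta)\Big(\log p_\theta^i(x_{k+1}^i) + \sum_{j=1}^{n}a_{ij}\log m_k^j(\theta)\Big)\, d\sigma(\theta).
\]
Set $g(\theta) = p_\theta^i(x_{k+1}^i)\prod_{j=1}^{n}\big(m_k^j(\theta)\big)^{a_{ij}}$, $Z = \int_\Theta g\,d\sigma$, and $\pi^\star = g/Z$. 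Substituting $\log p_\theta^i + \sum_j a_{ij}\log m_k^j = \log g = \log \pi^\star + \log Z$ gives
\[
J(\pi) \;=\; D_{KL}(\pi\|\pi^\star) \;-\; \log Z ,
\]
so, $\log Z$ being a constant, $J$ is minimized iff $D_{KL}(\pi\|\pi^\star)$ is, i.e.\ uniquely at $\pi = \pi^\star$ ($\sigma$-a.e.). Since $\pi^\star$ is precisely the normalized version of $p_\theta^i(x_{k+1}^i)\prod_j (d\mu_k^j(\theta))^{a_{ij}}$, integrating over a measurable set $B$ reproduces Eq.~\eqref{protocol}. (Equivalently, one can check that the Fr\'echet derivative of the Lagrangian $J(\pi)+\nu\big(\int\pi\,d\sigma-1\big)$ vanishes $\sigma$-a.e.\ exactly at $\pi^\star$, the constraint $\pi\ge 0$ being inactive, and invoke strict convexity of $J$; this yields the same conclusion.)

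The displayed manipulations are routine; the care is in the measure theory that makes them legitimate. Concretely, the two points I would want to nail down are: (i) that the normalizing constant $Z = \int_\Theta p_\theta^i(x_{k+1}^i)\prod_j (m_k^j(\theta))^{a_{ij}}\,d\sigma(\theta)$ is finite and strictly positive, so that $\pi^\star$ is a genuine probability density — this follows from the standing assumptions on $\mathscr{P}^i$, $\sigma$, and $\mu_0$ together with the induction hypothesis on the $m_k^j$; and (ii) that $\pi^\star \ll m_k^j$ for every $j$ with $a_{ij}>0$, so that all KL terms in $J(\pi^\star)$ are finite and the recursion stays inside the domain of $J$. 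These domination and integrability checks, not the algebra, are where the real content of the argument lies; everything else reduces to the identity $J(\pi) = D_{KL}(\pi\|\pi^\star) - \log Z$ and the uniqueness of the minimizer of the KL divergence, which also supplies the "almost everywhere" qualifier in the statement.
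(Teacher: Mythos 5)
Your proposal is correct and takes essentially the same route as the paper: both arguments reduce to the identity that the objective equals $D_{KL}(\pi\|\pi^\star)-\log Z$ with $\pi^\star$ the normalized density $p_\theta^i(x_{k+1}^i)\prod_j (d\mu_k^j)^{a_{ij}}/Z$, and then invoke nonnegativity and the a.e.\ uniqueness of the KL minimizer (the paper reaches the same identity by adding and subtracting $D_{KL}(\pi\|d\mu_{k+1}^i)$ rather than by combining the log terms via row-stochasticity, but this is the same decomposition). Your added remarks on finiteness and positivity of $Z$ and on domination are sensible diligence that the paper leaves implicit, not a different argument.
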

\begin{proof}
	We need to show that the density $d\mu_{k+1}^i$ associated with the probability measure $\mu_{k+1}^i$ defined by
	Eq.~\eqref{protocol} minimizes the problem in Eq.~\eqref{distributed}.
	To do so,  let $G(\pi)$ be the objective function for the problem in Eq.~\eqref{distributed},  i.e.,
	\begin{align*}
	G(\pi) & = \langle - \log p_\theta^i(x_{k+1}^i), \pi\rangle 
	+ \sum\limits_{j=1}^{n} a_{ij} D_{KL}(\pi\|d\mu_k^j).
	\end{align*}
	Next, we add and subtract the KL divergence between $\pi$ and the density  $d{\mu}_{k+1}^i$ to obtain
	\begin{align*}
	G(\pi) & 
	= \langle - \log p_\theta^i(x_{k+1}^i), \pi\rangle 
	+ \sum\limits_{j=1}^{n} a_{ij} D_{KL}(\pi\|d\mu_k^j) - D_{KL}\left(\pi\|d{\mu}_{k+1}^i\right)  + D_{KL}\left(\pi\|d{\mu}_{k+1}^i\right)\\
	&= \langle - \log p_\theta^i(x_{k+1}^i), \pi\rangle 
	+ D_{KL}\left(\pi\|d{\mu}_{k+1}^i\right) + \sum\limits_{j=1}^{n}a_{ij} \mathbb{E}_\pi \log \frac{d{\mu}_{k+1}^i}{d{\mu}_{k}^j}. 
	\end{align*}
	
	Now, from Eq.~\eqref{protocol} it follows that
	\begin{align}\label{prop1}
	G(\pi) &
	= \langle - \log p_\theta^i(x_{k+1}^i), \pi\rangle 
	+ D_{KL}\left(\pi\|d{\mu}_{k+1}^i\right) +  \nonumber\\
	& \qquad\qquad\sum\limits_{j=1}^{n}a_{ij} \mathbb{E}_\pi \log \left(  \frac{1}{d{\mu}_{k}^j} \frac{1}{Z_{k+1}^i}\prod\limits_{l=1}^{n} \left(d{\mu}_{k}^l\right)^{a_{il}}p^i_{\theta}(x_{k+1}^i) \right) \nonumber  \\
	&
	= \langle - \log p_\theta^i(x_{k+1}^i), \pi\rangle 
	+ D_{KL}\left(\pi\|d{\mu}_{k+1}^i\right)  \nonumber \\
	& \qquad\qquad -  \log Z_{k+1}^i + \langle \log p_\theta^i(x_{k+1}^i), \pi\rangle  + \sum\limits_{j=1}^{n}a_{ij} \mathbb{E}_\pi \log \left(  \frac{1}{d{\mu}_{k}^j} \prod\limits_{l=1}^{n} \left(d{\mu}_{k}^l\right)^{a_{il}} \right) \nonumber  \\
		&
	=  -  \log Z_{k+1}^i + D_{KL}\left(\pi\|d{\mu}_{k+1}^i\right) - \sum\limits_{j=1}^{n}a_{ij} \mathbb{E}_\pi \log  d{\mu}_{k}^j  + \sum\limits_{l=1}^{n}a_{il} \mathbb{E}_\pi \log  d{\mu}_{k}^l  \nonumber  \\
	&= -  \log Z_{k+1}^i + D_{KL}\left(\pi\|d{\mu}_{k+1}^i\right) 
	\end{align}
	where $Z_{k+1}^i = \int_{\theta}  p_\theta^i(x_{k+1}^i) \prod_{j=1}^{n} (d\mu_k^j(\theta))^{a_{ij}}$ is the corresponding normalizing constant. 
	
	The first term in Eq.~\eqref{prop1} does not depend on the distribution $\pi$. Thus, 
	we conclude that the solution to the problem in Eq.~\eqref{distributed} is 
	the density $\pi^* = d{\mu}_{k+1}^i$ as defined in Eq.~\eqref{protocol} (almost everywhere).
	\qed
\end{proof}

We remark that the update in Eq.~\eqref{protocol} can be viewed as two-step processes: first  every agent constructs an aggregate belief using a weighted geometric average of its own belief and the beliefs of its neighbors, and then each agent performs a Bayes' update using the aggregated belief as a prior. We note that similar arguments in the context of distributed optimization have been proposed  in \cite{rab15,li16} for general Bregman distances. In the case when the number of hypotheses is finite, 
 variations on this update rule were previously analyzed in  \cite{sha14,ned15,lal14b}. 

\vspace{-0.4cm}
\subsection{An example} 

\begin{example}\label{ex1}
	 Consider a group of $4$ agents, connected over a network as shown in Figure \ref{network}. 
A set of metropolis weights for this network is given by the following matrix:
\begin{align*}
A & = \left[\begin{array}{c c c c}
2/3 & 1/6 &  0 & 1/6 \\
1/6 & 2/3 &  1/6 & 0 \\
0 & 1/6 &  2/3 & 1/6 \\
1/6 & 0 &  1/6 & 2/3
\end{array} \right]. 
\end{align*}
\begin{figure}[h!]
	\centering
	\begin{overpic}[width=0.4\textwidth]{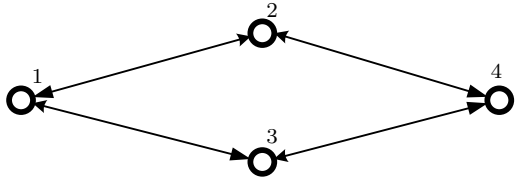}
		\put(4,19){{ $1$}}
		\put(50,32){{ $2$}}
		\put(50,7){{ $3$}}
		\put(94,20){{ $4$}}
	\end{overpic}
	\caption{A network of $4$ agents.}
	\label{network}
\end{figure}
Furthermore, assume that each agent is observing a Bernoulli random variable such that $X_k^1 \sim \text{Bern}(0.2)$, $X_k^2 \sim \text{Bern}(0.4)$, $X_k^3 \sim \text{Bern}(0.6)$ and $X_k^4 \sim \text{Bern}(0.8)$.
In this case, the parameter space is $\Theta =[0,1]$. Thus, the objective is to collectively find a parameter $\theta^*$ that best explains the joint observations in the sense of the problem in Eq. \eqref{opt_problem}, i.e.
\begin{align*}
\min_{\theta \in [0,1]} F(\theta) & = \sum_{j=1}^{4} D_{KL}(\text{Bern}(\theta)\|\text{Bern}(\theta^j))  = \sum_{j=1}^{4} \left( \theta \log\frac{\theta}{\theta^j} + (1-\theta)\log\frac{1-\theta}{1-\theta^j}\right) 
\end{align*}
where $\theta^1 = 0.2$, $\theta^2 = 0.4$, $\theta^3 = 0.6$ and $\theta^4 = 0.8$.
We  can be see that the optimal solution is $\theta^* = 0.5$ by determining it explicitly via the first-order optimality conditions or by exploiting the symmetry in the objective function.

Assume that all agents start with a common belief at time $0$ following a Beta distribution, i.e., 
$\mu_0^i = \text{Beta}(\alpha_0,\beta_0)$ (this specific choice will be motivated in the next section). 
Then, the proposed algorithm in Eq. \eqref{protocol} will generate a belief at time $k+1$ that also has a Beta distribution. Moreover, $\mu^i_{k+1} = \text{Beta}(\alpha_{k+1}^i,\beta_{k+1}^i)$, where
\begin{align*}
\alpha_{k+1}^i & = \sum_{j=1}^{n}a_{ij}\alpha_{k}^j + x_{k+1}^i, \qquad
\beta_{k+1}^i  =  \sum_{j=1}^{n}a_{ij}\beta_{k}^j + 1 - x^i_{k+1}.
\end{align*}
\end{example}

To summarize, we have given an interpretation of Bayes' rule as an instance of Stochastic Mirror Descent. We have shown how this interpretation motivates a distributed update rule.  In the next section, we discuss explicit forms of this update rule for parametric models coming from exponential families.

\section{Cooperative Inference for Exponential Families}\label{sec:inference}

We begin with the observation that, for a general class of models $\{\mathscr{P}^i\}$, it is not clear whether the computation of the posterior beliefs $\mu_{k+1}^i$ is tractable. Indeed, computation of $\mu_{k+1}^i$ involves solving an integral of the form
\begin{align}\label{integral_theta}
\int_{\Theta}p_\theta^i(x_{k+1}^i) \prod_{j=1}^{n} (d\mu_k^j(\theta))^{a_{ij}}.
\end{align}
There is an entire area of research called \textit{variational Bayes' approximations} dedicated to efficiently approximating integrals that appear in such context \cite{fox12,bea03,dai16}. 

The purpose of this section is to show that for exponential family~\cite{koo36,dar35} there are closed-form expressions for the posteriors. 

\begin{definition}
	The exponential family, for a parameter $\theta = [\theta^1,\theta^2,\hdots,\theta^s]'$, is the set of probability distributions whose density can be represented as 
	\begin{align*}
	p_\theta(x) & = H(x) \exp(M(\theta)'T(x)-C(\theta))
	\end{align*}
	for specific functions $H(\cdot)$, $M(\cdot)$, $T(\cdot)$ and $C(\cdot)$, with ${{M(\theta) = [M(\theta^1),M(\theta^2),\hdots,M(\theta^s)]'}}$. The function $M(\theta)$ is usually referred to as 
	the natural parameter.
	
	When $M(\theta)$ is used as a parameter itself, it is said that the distribution is in its canonical form. In this case, we can write the density as
	\begin{align*}
	p_M(x) & = H(x) \exp(M'T(x)-C(M)),
	\end{align*}
	with $M$ being the parameter.
\end{definition}

Among the members of the exponential family, one can find the distributions such as Normal, Poisson, Exponential, Gamma, Bernoulli, and Beta, among others~\cite{gel14}. In our case, 
we will take advantage of the existence of \textit{conjugate priors} for all members of the exponential family. The definition of the conjugate prior is given below.

\begin{definition}
	Assume that the prior distribution $p$ on a parameter space $\Theta$ belongs to the exponential family. Then, the distribution $p$ is referred to as the \textit{conjugate prior} for a likelihood function $p_\theta(x)$ if the posterior distribution $p(\theta|x) \propto p_\theta(x) p(\theta)$ is in the same family as the prior.
\end{definition}

Thus, if the belief density at some time $k$ is a conjugate prior for our likelihood model, then our belief at time $k+1$ will be of the same class as our prior. For example, if a likelihood function follows a Gaussian form, then having a Gaussian prior will produce a Gaussian posterior.  This property simplifies the structure of the belief update procedure, since we can express the evolution of the beliefs generated by the proposed algorithm in 
Eq.~\eqref{protocol} by the evolution of the natural parameters of the member of the exponential family it belongs to.

We now proceed to provide more details. 
First, the conjugate prior for a member of the exponential family can be written as
\begin{align*}
p_{\chi,\nu}(M) & = f(\chi,\nu)\exp(M'\chi - \nu C(M)),
\end{align*}
which is a distribution over the natural parameters $M$, where $\nu>0$ and $\chi \in \mathbb{R}^s$ are the parameters of the conjugate prior. Then, it can be shown that the posterior distribution,
given some observation $x$, has the same exponential form as the prior with updated parameters as
follows:
\begin{align}\label{posterior_expo}
p_{\chi,\nu}(M|x) & = p_{\chi+T(x),\nu + 1}(M) \propto p_{\theta}(x)p_{\chi,\nu}(M|x).
\end{align}

On the other hand, for a set on $n$ priors of the same exponential family, the weighted geometric averages 
also have a closed form in terms of the conjugate parameters.

\begin{proposition}\label{geo_expo}
	Let $(p_{\chi^1,\nu^1}(M),\hdots,p_{\chi^n,\nu^n}(M) )$ be a set of $n$ distributions, all in the same class in the exponential family, i.e.,  $p_{\chi^i,\nu^i}(M)=f(\chi^i,\nu^i)\exp(M'\chi^i - \nu^i C(M))$ for $i=1,\hdots,n$. Then, for a set $(\alpha_1,\hdots,\alpha_n)$ of weights with $\alpha_i >0$ for all $i$,
	the probability distribution defined as
	\begin{align*}
	p_{\bar \chi,\bar \nu}(M) & =\frac{\prod_{i=1}^{n}(p_{\chi^i,\nu^i}(M))^{\alpha_i}}{\int\prod_{j=1}^{n}(p_{\chi^j,\nu^j}(dM))^{\alpha_j}}, 
	\end{align*} 
	belongs to the same class in the exponential family with parameters $\bar \chi = \sum_{i=1}^{n}\alpha_i \chi^i$ and $\bar \nu = \sum_{i=1}^{n}\alpha_i \nu^i$.
\end{proposition}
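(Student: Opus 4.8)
### Proof Plan

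The plan is to compute the geometric average directly and verify that it has the claimed exponential form. First I would substitute the explicit density expressions $p_{\chi^i,\nu^i}(M)=f(\chi^i,\nu^i)\exp(M'\chi^i - \nu^i C(M))$ into the numerator of the definition of $p_{\bar\chi,\bar\nu}$ and collect exponents. Since
\begin{align*}
\prod_{i=1}^{n}\left(p_{\chi^i,\nu^i}(M)\right)^{\alpha_i}
& = \left(\prod_{i=1}^{n} f(\chi^i,\nu^i)^{\alpha_i}\right)\exp\left(M'\sum_{i=1}^{n}\alpha_i\chi^i - \left(\sum_{i=1}^{n}\alpha_i\nu^i\right)C(M)\right),
\end{align*}
the $M$-dependent part is exactly $\exp\left(M'\bar\chi - \bar\nu C(M)\right)$ with $\bar\chi=\sum_i\alpha_i\chi^i$ and $\bar\nu=\sum_i\alpha_i\nu^i$, while $\prod_i f(\chi^i,\nu^i)^{\alpha_i}$ is a constant not depending on $M$.

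Next I would observe that dividing by the normalizing integral $\int\prod_{j}(p_{\chi^j,\nu^j}(dM))^{\alpha_j}$ cancels the constant factor $\prod_i f(\chi^i,\nu^i)^{\alpha_i}$ and replaces it with whatever normalization makes the result integrate to one. Thus $p_{\bar\chi,\bar\nu}(M)$ is proportional to $\exp(M'\bar\chi - \bar\nu C(M))$, and since by definition it integrates to one, the proportionality constant must equal $f(\bar\chi,\bar\nu)$, the normalizing constant associated with the conjugate-prior parameters $(\bar\chi,\bar\nu)$. This identifies $p_{\bar\chi,\bar\nu}$ as precisely the member of the conjugate family with those parameters, completing the argument.

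The only genuine subtlety — and the step I expect to require the most care — is ensuring that $(\bar\chi,\bar\nu)$ actually lies in the admissible parameter set, i.e.\ that $\int\exp(M'\bar\chi - \bar\nu C(M))\,dM < \infty$ so that the normalization $f(\bar\chi,\bar\nu)$ is well-defined. Here I would use that $\bar\nu=\sum_i\alpha_i\nu^i>0$ since each $\nu^i>0$ and $\alpha_i>0$, and that $\bar\chi$ is a (positive) linear combination of the $\chi^i$; finiteness of the integral then follows from convexity/Hölder-type arguments applied to the integrable functions $p_{\chi^i,\nu^i}$, or one may simply invoke the standard fact that the conjugate-prior parameter space for an exponential family is convex and closed under the relevant operations. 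Everything else is the bookkeeping of exponents already sketched above.
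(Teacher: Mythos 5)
Your proof is correct and follows essentially the same route as the paper: substitute the exponential-family densities, collect the exponents to read off $\bar\chi=\sum_i\alpha_i\chi^i$ and $\bar\nu=\sum_i\alpha_i\nu^i$, and absorb the $M$-independent factors into the normalization. Your closing remark about verifying that $(\bar\chi,\bar\nu)$ lies in the admissible parameter set is a point of care the paper's proof silently omits, but it does not change the argument.
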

\begin{proof}
	We write the explicit geometric product, and discard the constant terms
	\begin{align*}
	p_{\bar \chi,\bar \nu}(M) 
	& \propto \prod_{i=1}^{n}(f(\chi^i,\nu^i)\exp(M'\chi^i - \nu^i C(M)))^{\alpha_i} \\
	&  \propto \exp\left(M' \sum_{i=1}^{n}\alpha_i \chi^i - \sum_{i=1}^{n}\alpha_i\nu^i C(M)\right) .
	\end{align*}
	
	The last line provides explicit values for the parameters of the new distribution.
	\qed
\end{proof}	

%

The relations in Eq.~\eqref{posterior_expo} and Proposition~\ref{geo_expo} allow us to write the algorithm 
in~Eq.~\eqref{protocol} in terms of the natural parameters of the priors, as shown by the following proposition. 

\begin{proposition}\label{prop_conjugate}
	Assume that the belief density $d\mu_k^i$ at time $k$ has an exponential form with natural parameters 
	$\chi^i_k$ and $\nu^i_k$ for all $1 \leq i \leq n$, and that these densities are conjugate priors of the likelihood models $p^i_\theta$. Then, the belief density at time $k+1$, as computed in the update rule in Eq.~\eqref{protocol}, has the same form as the beliefs at time $k$ with the natural parameters given by
	\begin{equation*}\label{protocol_natural}
		\chi^i_{k+1} = \sum_{j=1}^{n}a_{ij} \chi^j_k + T^i(x^i), \quad
		\nu_{k+1}^i= \sum_{j=1}^{n}a_{ij} \nu^j_k + 1\qquad\hbox{for all } i=1,\ldots,n.
	\end{equation*}
\end{proposition}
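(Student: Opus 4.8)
The plan is to use the two-step reading of the update rule Eq.~\eqref{protocol} noted right after Proposition~\ref{mirror}: agent $i$ first forms the normalized weighted geometric average $q_k^i \propto \prod_{j=1}^n (d\mu_k^j)^{a_{ij}}$ of its own and its neighbors' beliefs, and then performs one Bayes update of $q_k^i$ with the likelihood $p^i_\theta(x^i_{k+1})$. The two ingredients needed are exactly Proposition~\ref{geo_expo}, which handles the geometric-averaging step, and the conjugacy relation Eq.~\eqref{posterior_expo}, which handles the Bayes step; the proof then amounts to chaining them and tracking the parameters.

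Concretely, I would argue as follows. By hypothesis each $d\mu_k^j$ lies in the conjugate-prior class of the exponential family, $d\mu_k^j = p_{\chi_k^j,\nu_k^j}$. Since only the neighbors $j$ with $a_{ij} > 0$ contribute to the product, Proposition~\ref{geo_expo} applies to that sub-collection with weights $\alpha_j = a_{ij}$ and shows that $q_k^i$ is again a member of the same class, $q_k^i = p_{\bar\chi_k^i,\bar\nu_k^i}$ with $\bar\chi_k^i = \sum_{j=1}^n a_{ij}\chi_k^j$ and $\bar\nu_k^i = \sum_{j=1}^n a_{ij}\nu_k^j$ (the $a_{ij}=0$ terms contribute nothing). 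As this aggregate is still a conjugate prior for $p^i_\theta$, Eq.~\eqref{posterior_expo} applies to the Bayes update with the observation $x^i_{k+1}$, incrementing the first parameter by the sufficient statistic $T^i(x^i_{k+1})$ and the second by $1$. Hence $d\mu_{k+1}^i = p_{\chi_{k+1}^i,\nu_{k+1}^i}$ with $\chi_{k+1}^i = \bar\chi_k^i + T^i(x^i_{k+1}) = \sum_{j=1}^n a_{ij}\chi_k^j + T^i(x^i_{k+1})$ and $\nu_{k+1}^i = \bar\nu_k^i + 1 = \sum_{j=1}^n a_{ij}\nu_k^j + 1$, which is the claim. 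One still checks that the density produced this way coincides, up to the single normalizing constant, with $d\mu_{k+1}^i$ as defined by Eq.~\eqref{protocol}: the geometric-average normalization and the Bayes normalization both fold into the one normalizing constant appearing in Eq.~\eqref{protocol}, exactly as in the proof of Proposition~\ref{mirror}, so the identification holds $\sigma$-almost everywhere.

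The point that needs the most care, and the only place an extra hypothesis is tacitly used, is that $q_k^i$ must be a conjugate prior for agent $i$'s own likelihood $p^i_\theta$, not merely some exponential-family density: Proposition~\ref{geo_expo} only delivers the form $f(\bar\chi,\bar\nu)\exp(M'\bar\chi - \bar\nu C(M))$, and for Eq.~\eqref{posterior_expo} to apply the $C(\cdot)$ here must be the log-partition function of $p^i_\theta$. This holds because all agents' families $\mathscr{P}^i$ are taken from one common exponential family (as in Example~\ref{ex1}, where every $p^i_\theta$ is Bernoulli and hence every belief is Beta), so the conjugate-prior class is shared across agents and is closed under the weighted geometric averaging of Proposition~\ref{geo_expo}. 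No condition on the row sums of $A$ is needed for this statement, since the $a_{ij}$ enter linearly in $\chi$ and $\nu$; row-stochasticity will matter only later, in the concentration analysis.
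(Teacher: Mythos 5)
Your proposal is correct and follows essentially the same route as the paper, which proves this proposition simply by citing the conjugacy update in Eq.~\eqref{posterior_expo} for the Bayes step and Proposition~\ref{geo_expo} for the geometric-averaging step; you have merely written out the chaining of these two facts that the paper leaves implicit. Your closing remark that the aggregated prior must be conjugate to agent $i$'s \emph{own} likelihood (i.e., that the $C(\cdot)$ in Proposition~\ref{geo_expo} matches the log-partition function of $p^i_\theta$, which holds when all agents share a common exponential family and conjugate class) is a useful clarification of a hypothesis the paper uses tacitly.
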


The proof of Proposition \ref{prop_conjugate} follows immediately from Eq.~\eqref{posterior_expo} and Eq.~\eqref{geo_expo}.

Proposition \ref{prop_conjugate} simplifies the algorithm in Eq.~\eqref{protocol} and facilitates its use in traditional estimation problems where members of the exponential family are used. We next illustrate this by discussing a number of distributed estimation problems with likelihood models coming from exponential families. 

\vspace{-0.4cm}
\subsection{Distributed Poisson Filter}
Consider an observation model where the agent signals follow Poisson distributions, i.e., $X^i_k = \text{Poisson}(\lambda^i)$ for all $i$. In this case, the optimization problem to be solved is 
\begin{align*}
\min_{\lambda >0} F(\lambda) &  = \sum_{j=1}^{n} D_{KL}(\text{Poisson}(\lambda)\|\text{Poisson}(\lambda^j)) ,
\end{align*}
or equivalently,
$
\min_{\lambda >0} \{-\sum\limits_{i=1}^{n} \lambda^i\log \lambda + \lambda\}.
$

The conjugate prior of a Poisson likelihood model is the Gamma distribution. 
Thus, if at time $k$ the beliefs are given by $\mu_k^i  = \text{Gamma}(\alpha_k^i,\beta_k^i)$ for all $i$, 
then the beliefs at time $k+1$ are $\mu_{k+1}^i= \text{Gamma}(\alpha_{k+1}^i,\beta_{k+1}^i)$, where
\begin{align*}
\alpha_{k+1}^i & = \sum_{j=1}^{n}a_{ij}\alpha_{k}^j + x_{k+1}^i \qquad \text{and} \qquad
\beta_{k+1}^i  =  \sum_{j=1}^{n}a_{ij}\beta_{k}^j + 1 .
\end{align*}

\vspace{-0.4cm}
\subsection{Distributed Gaussian Filter with known variance}
Assume each agent observes a signal of the form $X^i_k = \theta^i +\epsilon^i_k$, where $\theta^i$ is finite and unknown, while $\epsilon^i \sim \mathcal{N}(0,1/\tau^i)$, with $\tau^i = 1/(\sigma^i)^2$, is known by agent $i$.
The optimization problem to be solved is 
\begin{align*}
\min_{\theta \in \mathbb{R}} F(\theta) &  = \sum_{j=1}^{n} D_{KL}(\mathcal{N}(\theta,1/\tau^j)\|\mathcal{N}(\theta^j,1/\tau^j)) ,
\end{align*}
or equivalently
$
\min_{\theta \in \mathbb{R}} \sum_{j=1}^{n} \tau^j(\theta - \theta^j)^2.
$

In this case, the likelihood models, the prior and the posterior are Gaussian. Thus, if the beliefs of the agents at time $k$ are Gaussian, i.e., $\mu_k^i = \mathcal{N}(\theta^i_k,1/\tau^i_k)$ for all $i=1\hdots,n$, 
then their beliefs at time $k+1$ are also Gaussian. In particular, they are given 
by ${{\mu_{k+1}^i = \mathcal{N}(\theta^i_{k+1},1/\tau^i_{k+1})}}$ for all $i=1\hdots,n$, with
\begin{align*}
\tau_{k+1}^i & =  \sum\limits_{j=1}^{n}a_{ij}\tau_k^j + \tau^i \qquad \text{and} \qquad
\theta_{k+1}^i = \frac{1}{\tau_{k+1}^i}\left( \sum\limits_{j=1}^{n} a_{ij} \tau_k^j  \theta_k^j + x_{k+1}^i\tau^i \right).
\end{align*}

We note that this specific setup is known a Gaussian Learning and has been studied in \cite{ned16,chu16}, where the expected parameter estimator is shown to converge at an $O(1/k)$ rate.

\vspace{-0.4cm}
\subsection{Distributed Gaussian Filter with unknown variance}

In this case, the agents want to cooperatively estimate the value of a variance. Specifically, based on observations of the form $X^i_k = \theta^i +\epsilon^i_k$, with $\epsilon^i_k \sim \mathcal{N}(0,1/\tau^i)$, where $\theta^i$ is known and $\tau^i$ is unknown to agent $i$, they want to solve the following problem
\begin{align*}
\min_{\tau > 0} F(\tau) &  = \sum_{j=1}^{n} D_{KL}(\mathcal{N}(\theta^j,1/\tau)\|\mathcal{N}(\theta^j,1/\tau^j)) .
\end{align*}

We choose the Scaled Inverse Chi-Squared\footnote{The density function of the Scaled Inverse Chi-Squared is defined for $x>0$ as ${{p_{\nu,\tau}(x) = \frac{(\tau v /2)^{v/2}}{\Gamma(v/2)}\frac{\exp(-\frac{-\nu \tau}{2x})}{x^{1+v/2}}}}$.} as the distribution of our prior, so that $\mu_k^i = \text{Scaled Inv}\text{-}\chi^2(\nu_k^i,\tau_k^i)$
for all $i$, then the beliefs at time $k+1$ are given by $\mu_{k+1}^i = \text{Scaled Inv}\text{-}\chi^2(\nu_{k+1}^i,\tau_{k+1}^i)$ for all $i$, with
\begin{align*}
\nu_{k+1}^i & = \sum\limits_{j=1}^{n}a_{ij}\nu_k^j+1 \qquad \text{and} \qquad
\tau_{k+1}^i  = \frac{1}{\nu_{k+1}^i}\left( \sum\limits_{j=1}^{n}a_{ij}\nu_k^j \tau_k^j +(x_{k+1}^i - \theta^i)^2\right) .
\end{align*}

\vspace{-0.4cm}
\subsection{Distributed Gaussian Filter with unknown mean and variance}

In the preceding examples, we have considered the cases when either the mean or the variance is known. Here, we will assume that both the mean and the variance are unknown and need to be estimated. 
Explicitly, we still have noise observations ${{X^i_k = \theta^i +\epsilon^i_k}}$, with $\epsilon^i_k \sim \mathcal{N}(0,1/\tau^i)$, and want to solve
\begin{align*}
\min_{\theta \in \mathbb{R},\tau > 0} F(\theta,\tau) &  = \sum_{j=1}^{n} D_{KL}(\mathcal{N}(\theta,1/\tau)\|\mathcal{N}(\theta^j,1/\tau^j)) .
\end{align*}

The Normal-Inverse-Gamma distribution serves as conjugate prior for the likelihood model over the parameters $(\theta,\tau)$. Specifically, we assume that the beliefs at time $k$ are given by
\begin{align*}
\mu_k^i &= \text{Normal-Inv-Gamma}(\theta^i_k,\tau_{k}^i,\alpha_k^i,\beta_k^i)\qquad\hbox{for all } i=1,\ldots,n.
\end{align*}
Then, the beliefs at time $k+1$ will have a Normal-Inverse-Gamma distribution with the following parameters
\begin{align*}
\tau_{k+1}^i & = \sum_{j=1}^{n}a_{ij}\tau_k^j +1, \qquad
\theta_{k+1}^i  = \frac{\sum_{j=1}^{n}a_{ij}\tau_k^j\theta_k^j +x^i_{k+1}}{\tau_{k+1}^i},\\
\alpha^i_{k+1} & = \sum_{j=1}^{n}a_{ij}\alpha_k^j + 1/2 ,\qquad  
\beta^i_{k+1}  = \sum_{j=1}^{n}a_{ij}\beta_{k}^j + \frac{\sum_{j=1}^{n}a_{ij}\tau^j_k(x^i_{k+1} - \theta^j_k)^2}{2\tau_{k+1}^i}.
\end{align*}

\section{Belief Concentration Rates}\label{sec:concentration}

We now turn to the presentation of our main results which concern the rate at which  beliefs generated by the update rule in Eq.~\eqref{protocol} concentrate around the true parameter $\theta^*$. We will break up our analysis into two cases. Initially, we will focus on the case when  $\Theta$  is a countable set, and will prove a concentration result for a ball containing the optimal hypothesis having finitely many hypotheses outside it. We will use this case to gently introduce the techniques we will use. We will then turn to our main scenario of interest,  namely when $\Theta$ is a compact subset of $\mathbb{R}^d$. Our proof techniques use concentration arguments for beliefs on Hellinger balls from the recent work~\cite{bir15} which, 
in turn, builds on the classic paper~\cite{lecam73}.

We begin with two subsections focusing on background information, definitions, and assumptions. 

\vspace{-0.4cm}
\subsection{Background: Hellinger Distance and Coverings}

We equip the set of all probability distributions over the parameter set $\mathscr{P}$ with the Hellinger distance\footnote{The Hellinger distance between two probability distributions $P$ and $Q$ is given by,
		\begin{align*}
		h^2\left(P,Q\right) & = \frac{1}{2} \int \left(\sqrt{\frac{dP}{d\lambda}}-\sqrt{\frac{dQ}{d\lambda}}\right)^2d\lambda,
		\end{align*}
		where $P$ and $Q$ are dominated by $\lambda$. Note that this formula is for the square of the Hellinger distance.}
	to obtain the {\it metric} space $\left(\mathscr{P},h\right)$.  The metric space induces a topology, where we can define an open ball $\mathcal{B}_r(\theta)$ with a radius $r>0$ centered at a point $\theta \in \Theta$, which we use to construct a special covering of subsets $B\subset \mathscr{P}$.
	\begin{definition}\label{h_balls}
		Define an $n$-Hellinger ball of radius $r$ centered at $\theta$ as
		\begin{align*}
		\mathcal{B}_r(\theta) & = \left\lbrace \hat\theta \in \Theta \left|   \sqrt{\frac{1}{n} \sum_{i=1}^{n} h^2\left(P^i_{\theta},P^i_{\hat\theta}\right) } \right. \leq r  \right\rbrace .
		\end{align*}
		Additionally, when no center is specified, it should be assumed that it refers to $\theta^*$, i.e. $\mathcal{B}_r = \mathcal{B}_r(\theta^*) $. 
\end{definition}

	Given an $n$-Hellinger ball of radius $r$, we will use the following notation for a covering of its complement $\mathcal{B}_{r}^c$. Specifically, we are going to express $\mathcal{B}_{r}^c$ as the union of finite disjoint and concentric anuli. 
	Let $r>0$ and $\{r_l\}$ be a finite strictly decreasing sequence such that ${r_1 =1}$ and $r_L = r$. Now, express the set $\mathcal{B}_{r}^c $ as the union of anuli generated by the sequence $\{r_l\}$ as 
	\begin{align*}
	\mathcal{B}_{r}^c & = \bigcup_{l = 1}^{L-1} \mathcal{F}_l,
	\end{align*}
	where $\mathcal{F}_l = \mathcal{B}_{r_{l}} \setminus\mathcal{B}_{r_{l+1}} $.


\vspace{-0.4cm}
\subsection{Background: Assumptions on Network and Mixing Weights}

Naturally, we need some assumptions on the matrix $A$. For one thing, the matrix $A$ has to be ``compatible'' with the underlying graph, in that information from node $i$ should not affect node $j$ if there is no edge from $i$ to $j$ in $\mathcal{G}$. At the other extreme, we want to rule out the possibility that $A$ is the identity matrix, which in terms of Eq. (\ref{protocol}) means nodes do not talk to their neighbors. Formally, we make the following assumption.

\begin{assumption}\label{assum:graph}
	The graph $\mathcal{G}$ and matrix $A$ are such that:
	\begin{enumerate}[(a)]
		\item $A$ is doubly-stochastic with $\left[A\right]_{ij} = a_{ij} > 0$ for  $i\ne j$ if and only if $(i,j)\in E$.
		\item $A$ has positive diagonal entries, $a_{ii}>0$ for all $i \in V $.
		\item The graph $\mathcal{G}$ is  connected.
	\end{enumerate}
\end{assumption} 

Assumption~\ref{assum:graph} is common in the distributed optimization literature. The construction of a set of weights satisfying Assumption~\ref{assum:graph} can be done in a distributed way, for example, 
by choosing the so-called ``lazy Metropolis'' matrix, which is a stochastic matrix given by
\begin{align*}
a_{ij} = \left\{  \begin{array}{l l}
\frac{1}{2 \max \left\{ d^i+1,d^j+1 \right\}  } & \quad \text{if $(i,j) \in E$},\\
0 & \quad \text{if $(i,j) \notin E$},
\end{array} \right. 
\end{align*}
where $d^i$ is the degree (the number of neighbors) of node $i$. Note that although the above formula only gives the off-diagonal entries of $A$, it uniquely defines the entire matrix (the diagonal elements are uniquely defined via the stochasticity of $A$). To choose the weights corresponding to a lazy Metropolis matrix, agents will need to spend an additional round  at the beginning of the algorithm broadcasting their degrees to their neighbors.

Assumption 1 can be seen to guarantee that $A^t \rightarrow (1/n) {\bf 1} {\bf 1}^T$ where ${\bf 1}$ is the vector of all ones. We will use the following result that provides convergence rate for the difference 
$|A^t - (1/n) {\bf 1} {\bf 1}^T|$, based on the results from~\cite{sha14} and~\cite{ned15}:
	\begin{lemma}\label{shahin}
		Let Assumption \ref{assum:graph} hold, then the matrix $A$ satisfies the following relation:
		\begin{align*}
		\sum\limits_{t=1}^{k} \sum\limits_{j=1}^{n} \left| \left[A^{k-t}\right]_{ij} - \frac{1}{n} \right| & \leq \frac{4 \log n}{1-\delta}  \qquad \mbox{ for } i = 1, \ldots, n,
		\end{align*}
		where $\delta = 1-\eta / 4n^2$ with $\eta$ being the smallest positive entry of the matrix $A$. Furthermore, if $A$ is a lazy Metropolis matrix associated with the graph $\mathcal{G}$, 
		then $\delta = 1- 1 / \mathcal{O}(n^2)$.
	\end{lemma}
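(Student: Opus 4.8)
The plan is to re-index the double sum as a one-sided geometric series in the powers of $A$ and then interpolate between two estimates for the row deviation
\[
\rho_i(s)\ :=\ \sum_{j=1}^{n}\Big|\,[A^{s}]_{ij}-\tfrac1n\,\Big| ,
\]
namely a crude uniform bound and a geometric (spectral) bound. First, substituting $s=k-t$ gives
\[
\sum_{t=1}^{k}\sum_{j=1}^{n}\Big|[A^{k-t}]_{ij}-\tfrac1n\Big|\ =\ \sum_{s=0}^{k-1}\rho_i(s)\ \le\ \sum_{s=0}^{\infty}\rho_i(s),
\]
so it suffices to bound this series uniformly over $i\in\{1,\dots,n\}$.

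For the two estimates: (i) each row of $A^{s}$ is a probability vector (since $A$ is stochastic), hence $\rho_i(s)\le 2$ for all $s\ge 0$; (ii) since $A$ is doubly stochastic, $AJ=JA=J$ with $J=\tfrac1n{\bf 1}{\bf 1}^{T}$, so $A^{s}-J=(A-J)^{s}$, and writing $u_i:=e_i-\tfrac1n{\bf 1}\in{\bf 1}^{\perp}$ and using Cauchy--Schwarz ($\|v\|_{1}\le\sqrt n\,\|v\|_{2}$),
\[
\rho_i(s)\ =\ \big\|\,u_i^{T}(A-J)^{s}\,\big\|_{1}\ \le\ \sqrt n\,\big\|\,u_i^{T}(A-J)^{s}\,\big\|_{2}\ \le\ \sqrt n\,\sigma^{s},
\]
where $\sigma$ is the contraction factor of $A-J$ on ${\bf 1}^{\perp}$. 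Under Assumption~\ref{assum:graph} the connectivity of $\mathcal{G}$ forces the spectral gap to be bounded below and the positive diagonal entries keep the relevant eigenvalues away from $-1$; the resulting quantitative estimate $\sigma\le\delta=1-\eta/(4n^{2})$, together with the sharper $\sigma\le 1-1/\mathcal{O}(n^{2})$ for the lazy Metropolis matrix, are exactly the facts established in \cite{sha14,ned15}, which we invoke here.

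Finally, combine the two estimates by truncation: let $s_{0}=\big\lceil \log(\sqrt n)/(1-\delta)\big\rceil$, and note that $\log(1/\delta)\ge 1-\delta$ yields $\sqrt n\,\delta^{s}\le\sqrt n\,\delta^{s_{0}}\le 1$ for $s\ge s_{0}$. Hence
\[
\sum_{s=0}^{\infty}\rho_i(s)\ \le\ 2s_{0}\ +\ \sum_{s\ge s_{0}}\sqrt n\,\delta^{s}\ \le\ 2s_{0}+\frac{\sqrt n\,\delta^{s_{0}}}{1-\delta}\ \le\ \frac{\log n}{1-\delta}+2+\frac{1}{1-\delta},
\]
and since $1-\delta\le 1$ the right-hand side is at most $(\log n+3)/(1-\delta)\le 4\log n/(1-\delta)$ for $n\ge 3$ (small $n$ being checked directly), which is the claimed bound; for the lazy Metropolis matrix the same computation with $\sigma\le 1-1/\mathcal{O}(n^2)$ gives the stated form of $\delta$. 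I expect the only genuinely nontrivial ingredient to be the spectral-gap estimate $\sigma\le 1-\eta/(4n^{2})$ (respectively $1-1/\mathcal{O}(n^{2})$); everything else is the re-indexing and the elementary truncation arithmetic above, and since Lemma~\ref{shahin} is stated as being ``based on'' \cite{sha14,ned15}, that estimate is imported from there rather than reproved.
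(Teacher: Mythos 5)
The paper does not actually prove Lemma~\ref{shahin}: it is stated as a known result imported from \cite{sha14} and \cite{ned15}, so there is no in-paper argument to compare yours against. Your reconstruction --- re-index to $\sum_{s\ge 0}\rho_i(s)$, use the trivial bound $\rho_i(s)\le 2$ up to a truncation time $s_0=\lceil \log(\sqrt n)/(1-\delta)\rceil$ and the geometric bound $\sqrt n\,\delta^{s}$ beyond it, then sum --- is the standard derivation behind this kind of estimate, and the arithmetic checks out: $\delta^{s_0}\le e^{-s_0(1-\delta)}\le 1/\sqrt n$, so the tail contributes at most $1/(1-\delta)$ and the head at most $\log n/(1-\delta)+2$, giving $(\log n+3)/(1-\delta)\le 4\log n/(1-\delta)$ once $\log n\ge 1$. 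Two caveats. First, your "small $n$ checked directly" is doing real work: for $n=2$ your general bound gives roughly $3.7/(1-\delta)$ versus the claimed $4\log 2/(1-\delta)\approx 2.77/(1-\delta)$, so the $n=2$ case genuinely requires the separate explicit computation (it does hold, since there $\rho_i(s)=|1-2a|^{s}$ and the sum is $1/(2a)\le 4\log 2\cdot 4n^2/\eta$), and $n=1$ is trivially zero. Second, your step $\lVert u_i^{T}(A-J)^{s}\rVert_{2}\le \sigma^{s}$ presumes a per-step $2$-norm contraction of $A-J$ on $\mathbf{1}^{\perp}$, i.e.\ a bound on the second \emph{singular} value; Assumption~\ref{assum:graph} requires $A$ doubly stochastic but not symmetric, and for non-symmetric doubly stochastic matrices the second singular value can equal $1$ even when the chain mixes. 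This is harmless here only because you are importing the quantitative decay from \cite{sha14,ned15} wholesale --- but then the imported fact should be stated as the decay of $\lVert [A^{s}]_{i\cdot}-\tfrac1n\mathbf{1}^{T}\rVert$ itself (which is what those references prove, by other means), not as a one-step spectral contraction from which you re-derive it. With that restatement your proof is complete and consistent with what the paper relies on.
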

	
\vspace{-0.4cm}
\subsection{Concentration for the Case of Countable Hypotheses}

We now turn to proving a concentration result when the set $\Theta$ of hypotheses is countable. We will consider the case of a ball in the Hellinger distance containing a countable number of hypotheses, including the correct one, and having only finitely many hypotheses outside it; we will show exponential convergence of beliefs to that ball. The purpose is to gently introduce the techniques we will use later in the case of a compact set of hypotheses. 

 In the case when the number of hypotheses is countable,  the density update in Eq.~\eqref{protocol} can be restated in a simpler form for discrete beliefs over the parameter space $\Theta$ as
\begin{align}\label{protocol_dis}
\mu_{k+1}^i(\theta) & \propto  p_\theta^i(x_{k+1}^i) \prod_{j=1}^{n} (\mu_k^j(\theta))^{a_{ij}} .
\end{align}

We will fix the radius $r$, and our goal will be to prove a concentration result for a Hellinger ball of radius $r$ around the optimal hypothesis $\theta^*$. We partition the complement of this ball $\mathcal{B}_r^c$ as described above into annuli $\mathcal{F}_l$. We introduce the notation $\mathcal{N}_l$ to denote the number of hypotheses within the annulus $\mathcal{F}_l$. We refer the reader to Figure \ref{lcovering_dis} which shows a set of probability distributions, represented as black dots, where the true distribution $P$ is represented by a star. 

\begin{figure}[ht]
	\centering
	\begin{overpic}[width=0.4\textwidth]{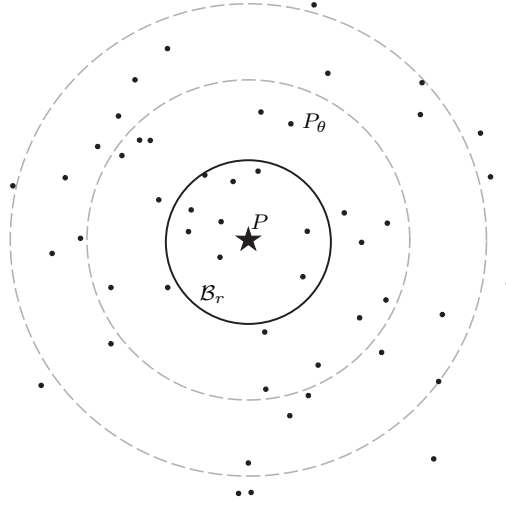}
		\put(38,39){{\small $\mathcal{B}_r $}}
		\put(48,53){{\small $P$}}
		\put(58,73){{\small $P_{\theta}$}}
	\end{overpic} 
	\caption{Creating a covering for a ball $\mathcal{B}_r $. $\bigstar$ represents the correct hypothesis 
		$\boldsymbol{P}_{\theta^*}$, $\bullet$ indicates the location of other hypotheses and the dash lines indicate the boundary of the balls $\mathcal{B}_{r_l} $.}
	\label{lcovering_dis}
\end{figure}

We will assume that the number of hypotheses outside the desired ball is finite.

\begin{assumption}\label{assum:conv_hyp} 	
The number of hypothesis outside $\mathcal{B}_r$ is finite. 
\end{assumption}

Additionally, we impose a bound on the separation between hypotheses which will avoid some pathological cases. The separation between hypotheses is defined in terms of the Hellinger affinity between two distributions
$Q$ and $P$, given by 
\[\rho(Q,P) = 1 - h^2(Q,P).\]

\begin{assumption}\label{assum:bound}
	There exists an $\alpha > 0 $ such that 
	$\rho(P_{\theta_1}^i,P_{\theta_2}^i)>\alpha$ for any $\theta_1, \theta_2 \in \Theta$ and $i = 1,\hdots,n$ .
\end{assumption}

With these assumptions in place, our first step is a lemma that bounds concentration of log-likelihood ratios.

\begin{lemma}\label{lemmab}
	Let Assumptions \ref{assum:graph}, \ref{assum:conv_hyp} and \ref{assum:bound} hold. Given a set of independent random variables $\{X^i_t\}$ such that $X^i_t \sim P^i$ for $i=1,\hdots,n$ and $t =1,\hdots, k$, a set of distributions $\{Q^i\}$ where $Q^i$ dominates $P^i$, then for all $y\in \mathbb{R}$,
	\begin{align*}
	\mathbb{P}\left[\sum\limits_{t=1}^{k} \sum\limits_{j=1}^{n} [A^{k-t}]_{ij} \log \frac{dQ^j}{dP^j}(X^j_t) \geq y \right]  
	& \leq \exp(-y / 2)  \exp\left(\log \frac{1}{\alpha} \frac{4 \log n}{1- \delta} \right)   \exp\left( -k \frac{1}{n}\sum\limits_{j=1}^{n} h^2(Q^j,P^j)\right).
	\end{align*}
\end{lemma}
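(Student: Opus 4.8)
The plan is to bound the probability via the standard Chernoff/Markov approach applied after exponentiation, then to exploit the fact that the exponent is a weighted sum of independent random variables across both $j$ and $t$, so that the moment generating function factorizes. Concretely, I would first apply Markov's inequality to the event $\{\sum_{t,j}[A^{k-t}]_{ij}\log\frac{dQ^j}{dP^j}(X^j_t)\ge y\}$ after multiplying through by $1/2$ and exponentiating, obtaining the bound
\begin{align*}
\mathbb{P}\Big[\sum_{t=1}^{k}\sum_{j=1}^{n}[A^{k-t}]_{ij}\log\tfrac{dQ^j}{dP^j}(X^j_t)\ge y\Big]\le e^{-y/2}\,\mathbb{E}\Big[\prod_{t=1}^{k}\prod_{j=1}^{n}\Big(\tfrac{dQ^j}{dP^j}(X^j_t)\Big)^{[A^{k-t}]_{ij}/2}\Big].
\end{align*}
By independence of the $X^j_t$ across $t$ and $j$, the expectation splits into a product $\prod_{t,j}\mathbb{E}_{X^j_t\sim P^j}\big[(\tfrac{dQ^j}{dP^j}(X^j_t))^{[A^{k-t}]_{ij}/2}\big]$.

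The next step is to control each factor $\mathbb{E}_{P^j}\big[(dQ^j/dP^j)^{\beta}\big]$ for an exponent $\beta=[A^{k-t}]_{ij}/2\in(0,1/2]$. For $\beta=1/2$ this expectation is exactly the Hellinger affinity $\rho(Q^j,P^j)=1-h^2(Q^j,P^j)$. For the general case I would write $\beta=\frac12-\big(\frac12-\beta\big)$ and peel off a factor: using $\big(dQ^j/dP^j\big)^\beta=\big(dQ^j/dP^j\big)^{1/2}\big(dP^j/dQ^j\big)^{1/2-\beta}$ together with a bound of the form $\big(dP^j/dQ^j\big)^{1/2-\beta}\le (1/\alpha)^{1/2-\beta}$ — which follows from Assumption~\ref{assum:bound} once one checks it forces a pointwise bound on the likelihood ratio, or alternatively via a convexity/interpolation argument on $\beta\mapsto\log\mathbb{E}_{P^j}[(dQ^j/dP^j)^\beta]$. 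This gives $\mathbb{E}_{P^j}\big[(dQ^j/dP^j)^{\beta}\big]\le (1/\alpha)^{1/2-\beta}\,\rho(Q^j,P^j)\le (1/\alpha)^{1/2-\beta}\exp(-h^2(Q^j,P^j))$, using $1-x\le e^{-x}$.

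Multiplying these bounds over all $t$ and $j$ produces two contributions. The $\rho$-terms give $\exp\big(-\sum_{t=1}^k\sum_{j=1}^n h^2(Q^j,P^j)\big)$; since $\sum_{j}[A^{k-t}]_{ij}=1$ is not what appears here — rather each $h^2(Q^j,P^j)$ appears once per $(t,j)$ after the $\beta=1/2$ normalization — one gets exactly $\exp\big(-k\sum_{j=1}^n h^2(Q^j,P^j)\big)=\exp\big(-kn\cdot\frac1n\sum_j h^2(Q^j,P^j)\big)$; I will need to be a little careful about whether the peeling leaves the full $1/2$ or the actual weight in the affinity exponent, and adjust the splitting so that it is $h^2$ and not $\frac12 h^2$ that appears. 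The $(1/\alpha)$-terms give $\exp\big(\log\frac1\alpha\sum_{t=1}^k\sum_{j=1}^n(\frac12-\frac{[A^{k-t}]_{ij}}{2})\big)$, and here I invoke Lemma~\ref{shahin}: $\sum_{t=1}^k\sum_{j=1}^n\big|[A^{k-t}]_{ij}-\frac1n\big|\le \frac{4\log n}{1-\delta}$, which after absorbing the $\sum_j \frac1n=1$ bookkeeping controls $\sum_{t,j}(\frac12-\frac12[A^{k-t}]_{ij})$ by $\frac12\cdot\frac{4\log n}{1-\delta}$ up to the factor matching the statement. Combining the three factors $e^{-y/2}$, $\exp(\log\frac1\alpha\cdot\frac{4\log n}{1-\delta})$, and $\exp(-k\frac1n\sum_j h^2(Q^j,P^j))$ yields the claim.

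The main obstacle is the bookkeeping in the exponent peeling: I must split the power $[A^{k-t}]_{ij}/2$ so that precisely the Hellinger affinity $\rho(Q^j,P^j)$ (exponent $1/2$) is extracted for every pair $(t,j)$ while the leftover exponents $\frac12-\frac{[A^{k-t}]_{ij}}{2}$ sum — via Lemma~\ref{shahin} and $\sum_j[A^{k-t}]_{ij}/n$-type identities — to at most $\frac{2\log n}{1-\delta}$, so that the stated constant $\frac{4\log n}{1-\delta}$ (absorbing the factor $2$) comes out. This requires the likelihood-ratio bound $dP^j/dQ^j\le 1/\alpha$ (or a suitable interpolation substitute) to be legitimately derivable from Assumption~\ref{assum:bound}; establishing that implication cleanly, and tracking constants through the $1-x\le e^{-x}$ steps, is where the care is needed. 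Everything else is a routine Chernoff argument leveraging independence.
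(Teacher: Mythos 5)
There are two genuine problems with your plan, both in the "peeling" step. First, the pointwise bound $dP^j/dQ^j \le 1/\alpha$ does not follow from Assumption~\ref{assum:bound}: that assumption lower-bounds the Hellinger affinity $\rho(P^j,Q^j)=\int\sqrt{dP^j\,dQ^j}$, which is an integral quantity, and it in no way constrains the likelihood ratio pointwise (two Gaussians with different means have affinity bounded away from zero but an unbounded likelihood ratio). Your fallback — convexity of $\beta\mapsto\log\mathbb{E}_{P^j}[(dQ^j/dP^j)^\beta]$ interpolated between $\beta=0$ and $\beta=1/2$ — does work, but it yields exactly $\mathbb{E}_{P^j}[(dQ^j/dP^j)^{[A^{k-t}]_{ij}/2}]\le\rho(Q^j,P^j)^{[A^{k-t}]_{ij}}$, i.e.\ it reproduces the paper's Jensen step rather than the factorization $(1/\alpha)^{1/2-\beta}\rho(Q^j,P^j)$ you want.

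Second, even granting your factorization, the bookkeeping fails: since $A$ is row-stochastic, $\sum_{t=1}^k\sum_{j=1}^n\bigl(\tfrac12-\tfrac12[A^{k-t}]_{ij}\bigr)=\tfrac{k(n-1)}{2}$, which grows linearly in $k$ and cannot be "absorbed" into the bound $\tfrac{4\log n}{1-\delta}$ from Lemma~\ref{shahin}; that lemma controls deviations of $[A^{k-t}]_{ij}$ from $1/n$, not from $1$. You are anchoring the decomposition at the full weight (exponent $1/2$ of the square root), which is why your affinity product comes out as $\prod_j\rho(Q^j,P^j)^{k}$ instead of the stated $\prod_j\rho(Q^j,P^j)^{k/n}$ and why the residual blows up. The fix — and the paper's route — is to write $\rho(Q^j,P^j)^{[A^{k-t}]_{ij}}=\rho(Q^j,P^j)^{1/n}\cdot\rho(Q^j,P^j)^{[A^{k-t}]_{ij}-1/n}$ after the Jensen/interpolation step: the first factor accumulates to $\exp\bigl(-\tfrac{k}{n}\sum_j h^2(Q^j,P^j)\bigr)$ via $\rho=1-h^2\le e^{-h^2}$, and the second is bounded by $(1/\alpha)^{|[A^{k-t}]_{ij}-1/n|}$ using Assumption~\ref{assum:bound} only through $\rho>\alpha$ (no pointwise ratio bound needed), with Lemma~\ref{shahin} then giving the network-dependent constant. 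Your Chernoff/independence setup is correct; it is the centering of the decomposition that must change.
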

\begin{proof}
	By the Markov inequality and Jensen's inequality we have 
	\begin{align*}
	\mathbb{P}\left[\sum\limits_{t=1}^{k} \sum\limits_{j=1}^{n} [A^{k-t}]_{ij} \log \frac{dQ^j}{dP^j}(X^j_t) \geq y \right]  & \leq \exp(-y / 2)\mathbb{E}\left[\prod\limits_{t=1}^{k} \prod\limits_{j=1}^{n} \sqrt{ \left( \frac{dQ^j}{dP^j}(X^j_t)\right) ^{[A^{k-t}]_{ij}}}\right] \\
	& \leq \exp(-y / 2) \prod\limits_{t=1}^{k} \prod\limits_{j=1}^{n} \mathbb{E}\left[ \sqrt{ \left( \frac{dQ^j}{dP^j}(X^j_t)\right) }\right]^{[A^{k-t}]_{ij}} \\
	& \leq \exp(-y / 2) \prod\limits_{t=1}^{k}\prod\limits_{j=1}^{n}\rho(Q^j,P^j)^{[A^{k-t}]_{ij}},
	\end{align*}
	where the last inequality follows from the definition of the Hellinger affinity function $\rho(Q,P)$. 
	Now, by adding and subtracting $\frac{1}{n}\sum_{j=1}^{n}\log \rho(Q^j,P^j)$ we have
	\begin{align*}
	\mathbb{P}\left[\sum\limits_{t=1}^{k} \sum\limits_{j=1}^{n} [A^{k-t}]_{ij} \log \frac{dQ^j}{dP^j}(X^j_t) \geq y \right]  
	& \leq \exp(-y / 2) \exp \left( \sum\limits_{t=1}^{k} \sum\limits_{j=1}^{n}([A^{k-t}]_{ij}-1/n)\log \rho(Q^j,P^j)^{} + \sum\limits_{t=1}^{k} \frac{1}{n} \sum\limits_{j=1}^{n}\log \rho(Q^j,P^j)^{} \right)  \\
	& \leq \exp(-y / 2) \exp \left(\log \frac{1}{\alpha}\sum\limits_{t=1}^{k} \sum\limits_{j=1}^{n}|[A^{k-t}]_{ij}-1/n| + \sum\limits_{t=1}^{k} \frac{1}{n} \sum\limits_{j=1}^{n}\log \rho(Q^j,P^j)^{} \right), 
	\end{align*}
	where the last line follows from $\rho(P^j,Q^j)>\alpha$.
	
	Then, from Lemma \ref{shahin} it follows that
	\begin{align*}
	\mathbb{P}\left[\sum\limits_{t=1}^{k} \sum\limits_{j=1}^{n} [A^{k-t}]_{ij} \log \frac{dQ^j}{dP^j}(X^j_t) \geq y \right]  & \leq \exp(-y / 2) \exp \left( \log \frac{1}{\alpha}\frac{4 \log n}{1- \delta}  + \sum\limits_{t=1}^{k} \frac{1}{n} \sum\limits_{j=1}^{n}\log \rho(Q^j,P^j)^{} \right)  \\
	& \leq \exp(-y / 2)  \exp\left( \log \frac{1}{\alpha}\frac{4 \log n}{1- \delta} \right)   \prod\limits_{j=1}^{n}\rho^k(Q^j,P^j)^{1 /n}   \\
	& \leq \exp(-y / 2)  \exp\left(\log \frac{1}{\alpha} \frac{4 \log n}{1- \delta} \right)   \prod\limits_{j=1}^{n}\exp(-k h^2(Q^j,P^j))^{1 /n}.
	\end{align*}
	The last inequality follows from $\rho(Q^j,P^j) = 1 - h^2(Q^j,P^j)$ and $1-x \leq \exp(-x)$ for $x \in [0,1]$. 
	\qed
\end{proof}

We are now ready to state our first main result, which bounds concentration of Eq. ~\eqref{protocol_dis} around the optimal hypothesis for a countable hypothesis set $\Theta$. The following theorem shows that the beliefs of all agents will concentrate around the Hellinger ball $\mathcal{B}_r$ at an exponential rate.

\begin{theorem}\label{main_count}
	Let Assumptions~\ref{assum:graph}, \ref{assum:conv_hyp} and \ref{assum:bound} hold, 
	and let $\sigma \in (0,1)$ be a desired probability  tolerance. 
	Then, the belief sequences $\{\mu_{k}^i\}$, $i\in V$ that are generated by the update rule in Eq.~\eqref{protocol_dis},
	with initial beliefs such that ${\mu_0^i(\theta^*) >\epsilon}$ for all $i$,  
	have the following property: for any radius $r>0$ with probability $1-\sigma$,
	\begin{align*}
	\mu_{k+1}^i\left(\mathcal{B}_r \right) & \geq 1 - \frac{1}{\epsilon}\chi  \exp \left(  -k r^2 \right) \qquad \text{ for all } i \text{ and all }k\geq N,  
	\end{align*}
		where  
	\begin{align*}
	N=& \inf \left\lbrace t \geq 1 \Bigg| \exp\left( \log\frac{1}{\alpha}\frac{4 \log n}{1- \delta} \right) \sum\limits_{l = 1}^{L-1}\mathcal{N}_{r_l} \exp \left(  -tr_{l+1}^2 \right) < \rho\right\rbrace, 
	\end{align*}
	$\chi = \Sigma_{l =1}^{L-1}  \exp\left(-\frac{1}{2}r_{l}^2 +\log \mathcal{N}_{r_l}\right)$, 
	${\delta = 1-\eta/n^2}$, and $\eta$ is the smallest positive element of the matrix $A$.
\end{theorem}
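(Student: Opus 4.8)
The plan is to control the belief ratio $\mu_{k+1}^i(\mathcal{B}_r^c)/\mu_{k+1}^i(\theta^*)$ and show it is small with high probability; since $\mu_{k+1}^i(\mathcal{B}_r)\ge 1-\mu_{k+1}^i(\mathcal{B}_r^c)/\mu_{k+1}^i(\theta^*)\cdot(\text{something})$, this gives the claim. First I would unroll the recursion in Eq.~\eqref{protocol_dis}: iterating the geometric-mean-and-Bayes update from time $0$ to $k+1$, and using that $A$ is doubly stochastic so the normalizing constants factor out, one obtains for any hypothesis $\theta$
\begin{align*}
\mu_{k+1}^i(\theta) & \propto \left(\prod_{j=1}^n \mu_0^j(\theta)^{[A^{k+1}]_{ij}}\right)\exp\left(\sum_{t=1}^{k+1}\sum_{j=1}^n [A^{k+1-t}]_{ij}\log p_\theta^j(x_t^j)\right).
\end{align*}
Taking the ratio of $\mu_{k+1}^i(\theta)$ to $\mu_{k+1}^i(\theta^*)$ kills the common normalizer, and the log of this ratio is exactly $\sum_{t}\sum_j [A^{k+1-t}]_{ij}\log\frac{dP_\theta^j}{dP_{\theta^*}^j}(x_t^j)$ plus a prior term bounded using $\mu_0^i(\theta^*)>\epsilon$. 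This is precisely the quantity Lemma~\ref{lemmab} is built to bound (with $Q^j=P_\theta^j$, $P^j=P_{\theta^*}^j$, since $P^i=P_{\theta^*}^i$).

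Next I would sum over hypotheses outside the ball. Write $\mu_{k+1}^i(\mathcal{B}_r^c)=\sum_{l=1}^{L-1}\sum_{\theta\in\mathcal{F}_l}\mu_{k+1}^i(\theta)$, and for each $\theta\in\mathcal{F}_l$ bound $\mu_{k+1}^i(\theta)\le \frac{1}{\epsilon}\mu_{k+1}^i(\theta^*)\exp(\text{log-ratio})$. Apply Lemma~\ref{lemmab} with the threshold $y$ chosen so that the bound on each annulus becomes summable: for $\theta\in\mathcal{F}_l$ we have $\frac1n\sum_j h^2(P_\theta^j,P_{\theta^*}^j)\ge r_{l+1}^2$ (by definition of $\mathcal{F}_l=\mathcal{B}_{r_l}\setminus\mathcal{B}_{r_{l+1}}$), so the exponential decay term is at least $\exp(-kr_{l+1}^2)$ in our favor. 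A union bound over the $\sum_l\mathcal{N}_{r_l}$ hypotheses outside the ball (finite by Assumption~\ref{assum:conv_hyp}), together with the definition of $N$ as the first time the "failure mass" $\exp(\log\frac1\alpha\cdot\frac{4\log n}{1-\delta})\sum_l\mathcal{N}_{r_l}\exp(-tr_{l+1}^2)$ drops below $\rho$, lets me absorb the probability budget $\sigma$ and simultaneously get, on the good event, the deterministic bound $\mu_{k+1}^i(\mathcal{B}_r^c)\le \frac1\epsilon\mu_{k+1}^i(\theta^*)\,\chi\exp(-kr^2)$, where the inner-annulus radii at scale $\frac12 r_l^2$ produce the constant $\chi=\sum_l\exp(-\tfrac12 r_l^2+\log\mathcal{N}_{r_l})$. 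Dividing through and using $\mu_{k+1}^i(\theta^*)\le 1$ gives $\mu_{k+1}^i(\mathcal{B}_r)=1-\mu_{k+1}^i(\mathcal{B}_r^c)\ge 1-\frac1\epsilon\chi\exp(-kr^2)$.

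The main obstacle I anticipate is the bookkeeping of the two distinct thresholds and the way the probability tolerance $\sigma$ is allocated: one needs the event "for all $k\ge N$ and all $i$" to hold with probability $1-\sigma$, which requires the union bound over $i$, over annuli, and implicitly over $k$ (or a monotonicity argument showing it suffices to control the tail at $k=N$), and the Markov-inequality threshold $y$ must be tuned per-annulus so that after the union bound the residual probability is summable and bounded by $\sigma$; reconciling this with the clean statement of $N$ (which is stated without explicit reference to $\sigma$, suggesting $\rho$ or the slack in the $\exp(-y/2)$ factor carries it) is the delicate part. A secondary technical point is justifying the interchange of "unrolling the recursion" with the normalization --- i.e.\ checking that all normalizing constants genuinely cancel in the ratio and that the prior contribution is uniformly bounded below by $\epsilon$ along every agent's trajectory --- but this is routine given Proposition~\ref{mirror} and Assumption~\ref{assum:graph}(a). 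Everything else is substitution into Lemma~\ref{lemmab} and Lemma~\ref{shahin}.
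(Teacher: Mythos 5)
Your proposal is correct and follows essentially the same route as the paper's proof: unroll the recursion in Eq.~\eqref{protocol_dis}, bound the mass outside $\mathcal{B}_r$ by the ratio of likelihood products against the term at $\theta^*$ (absorbing the prior via $\mu_0^i(\theta^*)>\epsilon$), apply Lemma~\ref{lemmab} with $Q^j=P_\theta^j$, $P^j=P_{\theta^*}^j$ and the threshold $y=-\tfrac{k}{n}\sum_j h^2(P_\theta^j,P^j)$, take a union bound over the finitely many hypotheses organized into the annuli $\mathcal{F}_l$, and define $N$ as the first time the resulting failure probability drops below the tolerance. The delicate points you flag are real but minor: the $\rho$ in the stated definition of $N$ is indeed a typo for $\sigma$, and the paper's handling of the ``for all $k\geq N$'' quantifier is exactly as loose as you suspect.
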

\begin{proof}
	We are going to focus on bounding the beliefs of a measurable set $B$, such that $\theta^* \in B$. For such a set, it follows from Eq.~\eqref{protocol_dis} that
	\begin{align*}
	\mu_{k}^i\left(B\right) & = \frac{1}{Z_{k}^i}\sum\limits_{\theta \in B} 
	\left(\prod\limits_{j=1}^{n} \mu_0^j\left(\theta\right)^{\left[A^{k}\right]_{ij} } \right)
	\prod\limits_{t=1}^{k} \prod\limits_{j=1}^{n}  p^j_{\theta}(X_{t}^j)^{\left[A^{k-t}\right]_{ij}},
	\end{align*}
	where $ Z_{k}^i $ is the appropriate normalization constant. 
	
	Furthermore, after a few algebraic operations we obtain
	\begin{align*}
	\mu_{k}^i\left(B\right) & \geq 1 - \sum\limits_{\theta \in B^c} { \prod\limits_{j=1}^{n} \left( \frac{\mu_0^j(\theta)}{\mu_0^j(\theta^*) }\right) ^{\left[A^{k}\right]_{ij} }} \prod\limits_{t=1}^{k} \prod\limits_{j=1}^{n} \left(  \frac{p^j_{\theta}(X_{t}^j)}{p^j(X_{t}^j)}\right) ^{\left[A^{k\text{-}t}\right]_{ij}}.
	\end{align*}
	
	Moreover, since $\mu_0^i(\theta^*)  > \epsilon$ for all $i=1,\hdots,n$, it follows that
	\begin{align}\label{ready_for_bounds2}
	\mu_{k}^i\left(B\right) & \geq 1 - \frac{1}{\epsilon}\sum\limits_{\theta \in B^c}\prod\limits_{t=1}^{k} \prod\limits_{j=1}^{n} \left(  \frac{p^j_{\theta}(X_{t}^j)}{p^j(X_{t}^j)}\right) ^{\left[A^{k\text{-}t}\right]_{ij}}.
	\end{align}
	
	The relation in Eq.~\eqref{ready_for_bounds2} describes the iterative averaging of products of density functions, for which we can use Lemma \ref{lemmab} with $Q=P_\theta$ and $P = P_{\theta^*}$. Then,
	\begin{align*}
	\mathbb{P}\left( \left\lbrace \boldsymbol{X}^k \Bigg| \sup_{\theta \in B^c}\sum\limits_{t=1}^{k} \sum\limits_{j=1}^{n} [A^{k-t}]_{ij} \log \frac{p_{\theta}^j(X^j_t)}{p^j(X^j_t)} \geq y \right\rbrace \right)   &   \leq \sum_{\theta \in B^c} \exp(-y / 2)  \exp\left(\log\frac{1}{\alpha} \frac{4 \log n}{1- \delta} \right)   \exp\left( -k \frac{1}{n}\sum\limits_{j=1}^{n} h^2(P_{\theta}^j,P^j)\right)
	\end{align*}
	and by setting $y =-k \frac{1}{n}\sum\limits_{j=1}^{n} h^2(P_{\theta}^j,P^j)$ we obtain
	\begin{align*}
	&\mathbb{P}\left( \left\lbrace \boldsymbol{X}^k \Bigg| \sup_{\theta \in B^c}\sum\limits_{t=1}^{k} \sum\limits_{j=1}^{n} [A^{k-t}]_{ij} \log \frac{p_{\theta}^j(X^j_t)}{p^j(X^j_t)} \geq -k \frac{1}{n}\sum\limits_{j=1}^{n} h^2(P_{\theta}^j,P^j) \right\rbrace \right)  \\
	& \qquad \qquad \leq \exp\left(\log\frac{1}{\alpha} \frac{4 \log n}{1- \delta} \right) \sum_{\theta \in B^c} \exp\left( -\frac{k}{2} \frac{1}{n}\sum\limits_{j=1}^{n} h^2(P_{\theta}^j,P^j)\right).
	\end{align*}
	
	Now, we let the set $B$ be the Hellinger ball of a radius $r$ centered at $\theta^*$ 
	and define a cover (as described above) to exploit the representation of 
	$\mathcal{B}_r^c $ as the union of concentric Hellinger annuli, for which we have
	\begin{align*}
	&\mathbb{P}\left( \left\lbrace \boldsymbol{X}^k \Bigg| \sup_{\theta \in B^c}\sum\limits_{t=1}^{k} \sum\limits_{j=1}^{n} [A^{k-t}]_{ij} \log \frac{p_{\theta}^j(X^j_t)}{p^j(X^j_t)} \geq -k \frac{1}{n}\sum\limits_{j=1}^{n} h^2(P_{\theta}^j,P^j) \right\rbrace \right)\\
& \qquad \qquad \leq \exp\left( \log\frac{1}{\alpha}\frac{4 \log n}{1- \delta} \right) \sum\limits_{l = 1}^{L-1} \sum_{\theta \in \mathcal{F}_l} \exp\left( -\frac{k}{2} \frac{1}{n}\sum\limits_{j=1}^{n} h^2(P_{\theta}^j,P^j)\right)\\
& \qquad \qquad \leq \exp\left( \log\frac{1}{\alpha}\frac{4 \log n}{1- \delta} \right) \sum\limits_{l = 1}^{L-1}\mathcal{N}_{r_l} \exp \left(  -kr_{l+1}^2 \right).
	\end{align*} 

	We are interested in finding a value of $k$ large enough such that the above probability is below $\sigma$. Thus, lets define the value of $N$ as
	\begin{align*}
	N=& \inf \left\lbrace t \geq 1 \Bigg| \exp\left( \log\frac{1}{\alpha}\frac{4 \log n}{1- \delta} \right) \sum\limits_{l = 1}^{L-1}\mathcal{N}_{r_l} \exp \left(  -tr_{l+1}^2 \right) < \sigma\right\rbrace .
	\end{align*}
	
	It follows that for all $k\leq N$ with probability $1-\sigma$, for all $\theta \in \mathcal{B}_{r}^c $
	\begin{align*}
	\sum\limits_{t=1}^{k} \sum\limits_{j=1}^{n} [A^{k-t}]_{ij} \log \frac{p_{\theta}^j(X^j_t)}{p^j(X^j_t)} \leq -k \frac{1}{n}\sum\limits_{j=1}^{n} h^2(P_{\theta}^j,P^j).
	\end{align*}
	
	Thus, from Eq.~\eqref{ready_for_bounds2} with probability $1-\sigma$ we have
	\begin{align*}
	\mu_{k}^i\left(\mathcal{B}_{r} \right)
	& \geq 1 - \frac{1}{\epsilon}\sum_{\theta \in \mathcal{B}_{r}^c } \exp \left(  -k \frac{1}{n}\sum\limits_{j=1}^{n} h^2(P_{\theta}^j,P^j) \right)  \\
	&= 1 - \frac{1}{\epsilon}\sum\limits_{l =1}^{L-1}\sum_{\theta \in \mathcal{F}_l} \exp \left(  -k \frac{1}{n}\sum\limits_{j=1}^{n} h^2(P_{\theta}^j,P^j)  \right)\\
	&  \geq 1 -\frac{1}{\epsilon}\sum\limits_{l =1}^{L-1} \mathcal{N}_{r_{l}} \exp \left(  -kr_{l+1}^2 \right)\\
	&  \geq 1 -\frac{1}{\epsilon}\sum\limits_{l = 1}^{L-1} \mathcal{N}_{r_l}  \exp \left(  -r_{l+1}^2 \right)\exp \left(  -(k-1)r_{l}^2 \right)\\
	&  \geq 1 - \chi  \frac{1}{\epsilon}\exp \left(  -(k-1) r^2 \right),
	\end{align*}
	where $\chi = \Sigma_{l =1}^{L-1}  \exp\left(-\frac{1}{2}r_{l}^2 +\log \mathcal{N}_{r_l}\right)$.
	\qed
\end{proof}

\vspace{-0.4cm}
\subsection{A Concentration Result for a Compact Set of Hypotheses}

Next we consider the case when the hypothesis set $\Theta$ is a compact subset of $\mathbb{R}^d$. We will now additionally require the map from $\Theta$ to $\prod_{i=1}^n P_{\theta}^i$  be continuous (where the topology on the space of distributions comes from the Hellinger metric). 
This will be useful in defining coverings, which will be made clear  shortly.

\begin{definition}\label{separated}
	Let $\left(M,d\right)$ be a metric space. A subset $S \subseteq M$ is called $\varepsilon$-separated with $\varepsilon>0$ if $d(x,y)\geq \varepsilon$ for any $x,y \in S$. Moreover, for a set 
	$B \subseteq M$, let $N_B(\varepsilon)$ be the smallest number of Hellinger balls with centers in $S$ 
	of radius $\varepsilon$ needed to cover the set $B$, i.e., such that $B \subseteq \bigcup_{ m \in S} \mathcal{B}_{\varepsilon}\left(m\right)$.

As before, given a decreasing sequence $1=r_1 \geq r_2 \geq \cdots \geq r_L = r$, we will define the annulus $\mathcal{F}_l$ to be $\mathcal{F}_l = \mathcal{B}_{r_{l}} \setminus\mathcal{B}_{r_{l+1}}$. Furthermore, $S_{\varepsilon_l}$ will denote maximal $\varepsilon_l$-separated subset of $\mathcal{F}$. Finally, $K_l = |S_{\varepsilon_l}|$.  
\end{definition}
	
	We note that, as a consequence of our assumption that the map from $\Theta$ to $\prod_{i=1}^n P_{\theta}^i$ is continuous, we have that each $K_l$ is finite (since the image of a compact set under a continuous map is compact). 	Thus, we have the following covering of $\mathcal{B}_{r}^c $:
	\begin{align*}
	\mathcal{B}_{r}^c  & = \bigcup_{l = 1}^{L-1}\bigcup_{m \in S_{\varepsilon_l}} \mathcal{F}_{l,m} ,
	\end{align*}
	where each $\mathcal{F}_{l,m}$ is the intersection of a ball in $S_{\varepsilon_l}$ with $\mathcal{F}_l$. Figure \ref{lcovering_cont} shows the elements of a covering for a set $\mathcal{B}_r^c $. The cluster of circles at the top right corner represents the balls $ \mathcal{B}_{\varepsilon_l}$ and, for a specific case in the left of the image, we illustrate the set $\mathcal{F}_{l,m}$.

\begin{figure}[h]
	\centering
	\begin{overpic}[width=0.4\textwidth]{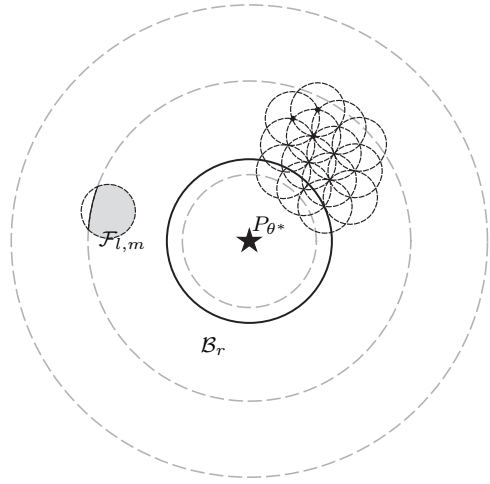}
		\put(38,29){{\small $\mathcal{B}_{r} $}}
		\put(18,50){{\small $\mathcal{F}_{l,m}$}}
		\put(48,53){{\small $P_{\theta^*}$}}
	\end{overpic} 
	\caption{Creating a covering for a set $\mathcal{B}_r $. $\bigstar$ represents the correct hypothesis $\boldsymbol{P}_{\theta^*}$. }
	\label{lcovering_cont}
\end{figure}

 \begin{example}
We continue Example \ref{ex1} from Section \ref{sec:variational}. Suppose we are interested in analyzing the concentration of the beliefs around the true parameter $\theta^*$ on a Euclidean ball of radius $0.05$; that is we want to see the total mass on the set $[0.45,0.55]$. This in turn, represents a Hellinger ball of radius $r=0.001254$. For this choice of $r$, we propose a covering where $r_1 = 1$, $r_2 = 1/2$, $r_3 = 1/4$, $\hdots$, $r_{10} =1/512$, $r_{11} =r$.
\begin{figure}[ht]
	\centering
	\begin{overpic}[width=0.7\textwidth]{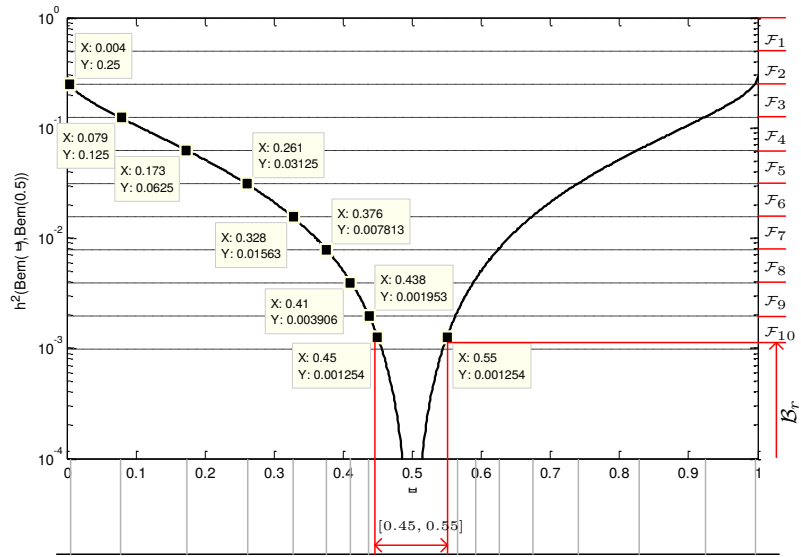}
		\put(47.7,3){{\tiny $[0.45,0.55]$}}
		\put(91,58){{\tiny $\mathcal{F}_1$}}
		\put(91,54.5){{\tiny $\mathcal{F}_2$}}
		\put(91,51){{\tiny $\mathcal{F}_3$}}
		\put(91,47.1){{\tiny $\mathcal{F}_4$}}
		\put(91,43.6){{\tiny $\mathcal{F}_5$}}
		\put(91,39.9){{\tiny $\mathcal{F}_6$}}
		\put(91,36.3){{\tiny $\mathcal{F}_7$}}
		\put(91,32.3){{\tiny $\mathcal{F}_8$}}
		\put(91,28.5){{\tiny $\mathcal{F}_9$}}
		\put(91,25){{\tiny $\mathcal{F}_{10}$}}
		\put(93,15){{\rotatebox{90}{$\mathcal{B}_r $}}}
	\end{overpic}
	\caption{Hellinger distance of the density $p_\theta$ to the optimal density $p_{\theta^*}$.}
	\label{hell_ball}
\end{figure}

Figure \ref{hell_ball} shows the Hellinger distance between the hypotheses $p_\theta$ and the optimal one $p_{\theta^*}$. Specifically, the $x$-axis is the value of $\theta$, and the $y$-axis shows the Hellinger distance between the distributions. Figure \ref{hell_ball} also shows the covering we defined before, as horizontal lines for each value of the sequence $r_l$, which in turn defines the annulus $\mathcal{F}_l$. T
he Hellinger ball of radius $r$ is also shown, with the corresponding subset of $\Theta$ where we want to analyze the belief concentration.

In this example, the parameter has dimension $1$. The number of balls needed to cover each annulus can be seen to be 2, i.e., we only need $2$ balls of radius $r_l/2$ to cover the annulus $\mathcal{F}_l$. 
Thus, $K_l=2$ for $1 \leq l \leq L-1$.
\qed
\end{example}

Our concentration result requires the following assumption on the densities.

\begin{assumption}\label{assum:max}
	For every $i = 1,\hdots,n$ and all $\theta$, it holds that $p_\theta^i(x)\leq 1$ almost everywhere.
\end{assumption}

Assumption \ref{assum:max} will be technically convenient for us. It can be made without loss of generality in the following sense: we can always modify the underlying problem to make it hold. 

Let us give an example before explaining the  reasoning behind this assertion. Let us assume there is just one  agent, and say $X \sim P$ is Gaussian with mean $\theta^* = 5$ and variance $0.01$. Our model is  $P_\theta = \mathcal{N}(\theta,0.01)$ for $\theta \in \Theta = [0,10]$. Because the variance is so small, the density values are larger than $1$. Instead let us multiply all our observations by $10$. We will then have that our observations come from $10X$, which indeed has density upper bounded by one. In turn our model now should be  $Q_\theta = \mathcal{N}(10\theta,1)$ or, alternatively, $Q_\theta = \mathcal{N}(\theta,1)$ for $\theta \in \hat \Theta = [0,100]$.

We note that this modification does not come without cost. As in the case of countable hypotheses, 
our convergence rates will depend on $\alpha$, defined to be a positive number such that 
$\rho(P_{\theta^1},P_{\theta^2})>\alpha$ for any $\theta^1$ and $\theta^2$. 
The process we have sketched out can decrease this parameter $\alpha$. 

In the general case, if each agent observe $X_t^j \sim P^j$, then there exists a large enough constant $M >1$ such that ${{MX_t^j \sim Q^j}}$ where the density of $Q^j$ is at most $1$. We can then have agents multiply their measurements by $M$ and redefine the densities to account for this. 

We next provide a concentration result for the logarithmic likelihood of a ratio of densities, which will serve the same technical function as Lemma \ref{lemmab} in the countable hypothesis case. 
We begin by defining two measures. For a hypothesis~$\theta$ and a measurable set $B \subseteq \Theta$, let ${\boldsymbol{P}}_{B}^{\otimes k}$ be the probability distribution with density
\begin{align}\label{density_g}
g_B(\boldsymbol{x}^k) & = \frac{1}{\mu_0(B)} \int\limits_{B} \prod\limits_{t=1}^{k} \prod\limits_{j=1}^{n} p_\theta^j(x_t^j)d\mu_{0}(\theta).
\end{align} 
Similarly, let $\bar{\boldsymbol{P}}_{B}^{\otimes k}$ be the measure with density (i.e., Radon-Nikodym derivative with respect to $\lambda^{\otimes nk}$),
	\begin{align}\label{bar_density_g}
\bar g_B(\boldsymbol{x}^k) & = \frac{1}{\mu_0(B)} \int\limits_{B} \prod\limits_{t=1}^{k} \prod\limits_{j=1}^{n} ( p_\theta^j(x_t^j))^{[A^{k-t}]_{ij}}d\mu_{0}(\theta).
\end{align} 

Note that $\bar{\boldsymbol{P}}_{B}^{\otimes k}$'s are not probability distributions due to the exponential weights. Nonetheless, they are bounded and positive. The next lemma shows the concentration of the logarithmic ratio of two weighted densities, as defined in Eq.~\eqref{bar_density_g}, for two different sets $B_1$ and $B_2$, in terms of the probability distribution ${\boldsymbol{P}}_{B_1}^{\otimes k}$.

\begin{lemma}\label{lemma2}
	Let Assumptions \ref{assum:graph}, \ref{assum:bound} and \ref{assum:max} hold. Consider two measurable sets $B_1,B_2 \subset \Theta$, both with positive measures, and assume that $B_{1} \subset \mathcal{B}_{r^{1}}(\theta^{1})$ and $B_{2} \subset \mathcal{B}_{r^{2}}(\theta^{2})$ {where $\mathcal{B}_{r^{1}}(\theta^{1})$ and $\mathcal{B}_{r^{2}}(\theta^{2})$ are disjoint}. Then, for all $y \in \mathbb{R}$
	\begin{align*}
	{\mathbb{P}}_{B_1}\left[\log \frac{\bar g_{B_2}(\boldsymbol{X}^k)}{\bar g_{B_1}(\boldsymbol{X}^k)} \geq y\right] 
	& \leq  \exp(- y / 2) \exp \left(\log \frac{1}{\alpha} \frac{4 \log n}{1- \delta} \right) \exp\left( -k \left( \sqrt{\frac{1}{n}\sum_{j=1}^{n} h^2(P^j_{\theta^1},P^j_{\theta^2})} -r^1 -r^2\right)^2\right),
	\end{align*} 
	where ${\mathbb{P}}_{B_1}$ is the probability measure that gives $\boldsymbol{X}^k$ having a distribution ${\boldsymbol{P}}_{B_1}^{\otimes k}$  with density $ g_{B_1}$ as defined in Eq.~\eqref{density_g}.
\end{lemma}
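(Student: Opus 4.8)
The plan is to mimic the structure of the proof of Lemma~\ref{lemmab}, combining a Markov/Chernoff step with a Hellinger-affinity estimate and then invoking Lemma~\ref{shahin} to control the weight discrepancy $|[A^{k-t}]_{ij}-1/n|$. First I would apply Markov's inequality to the event $\{\log(\bar g_{B_2}/\bar g_{B_1}) \geq y\}$ with the exponent $1/2$, obtaining
\begin{align*}
{\mathbb{P}}_{B_1}\left[\log \frac{\bar g_{B_2}(\boldsymbol{X}^k)}{\bar g_{B_1}(\boldsymbol{X}^k)} \geq y\right]
& \leq \exp(-y/2)\,\mathbb{E}_{{\boldsymbol{P}}_{B_1}^{\otimes k}}\left[\sqrt{\frac{\bar g_{B_2}(\boldsymbol{X}^k)}{\bar g_{B_1}(\boldsymbol{X}^k)}}\right].
\end{align*}
Writing this expectation as an integral against $g_{B_1}$, the $g_{B_1}$ in the numerator does not cancel $\bar g_{B_1}$ because of the exponential weights; however, Assumption~\ref{assum:max} gives $p_\theta^j \le 1$, hence $(p_\theta^j)^{[A^{k-t}]_{ij}} \ge p_\theta^j$ (since the exponent is at most $1$), so $\bar g_{B_1} \ge g_{B_1}$ and therefore $\sqrt{g_{B_1}/\bar g_{B_1}} \le 1$. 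This lets me bound the expectation by $\int \sqrt{\bar g_{B_2}(\boldsymbol{x}^k)\, g_{B_1}(\boldsymbol{x}^k)}\,d\lambda^{\otimes nk}$, i.e.\ by a Hellinger-type affinity between $\bar{\boldsymbol{P}}_{B_2}^{\otimes k}$ and ${\boldsymbol{P}}_{B_1}^{\otimes k}$.

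Next I would pull the integrals over $B_1$ and $B_2$ outside using the concavity of $\sqrt{\cdot}$ (Jensen, with respect to the normalized measures $d\mu_0/\mu_0(B_i)$), reducing the affinity of the mixtures to an average of pointwise affinities between product densities indexed by $\theta\in B_1$, $\tilde\theta\in B_2$. For fixed $\theta,\tilde\theta$, the integral factorizes over $t$ and $j$, and each factor is $\int \sqrt{(p^j_{\tilde\theta}(x))^{[A^{k-t}]_{ij}} p^j_\theta(x)}\,d\lambda$. I would then add and subtract $1/n$ in the exponent and split: the ``$1/n$'' part contributes $\prod_{t,j}\big(\int \sqrt{(p^j_{\tilde\theta})^{1/n}p^j_\theta}\big)$, which after a further application of H\"older/Jensen is bounded by $\prod_j \rho(P^j_{\tilde\theta},P^j_\theta)^{k/n} = \prod_j(1-h^2(P^j_{\tilde\theta},P^j_\theta))^{k/n} \le \exp(-k\frac1n\sum_j h^2(P^j_{\tilde\theta},P^j_\theta))$; the residual ``$[A^{k-t}]_{ij}-1/n$'' part is controlled, exactly as in Lemma~\ref{lemmab}, using $p^j_{\tilde\theta}\le 1$ together with the lower bound $\rho>\alpha$ from Assumption~\ref{assum:bound} and the summability bound $\sum_{t}\sum_j |[A^{k-t}]_{ij}-1/n|\le 4\log n/(1-\delta)$ of Lemma~\ref{shahin}, yielding the factor $\exp(\log\frac1\alpha \frac{4\log n}{1-\delta})$.

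Finally I would convert the pointwise Hellinger term into the statement's geometric quantity: for $\theta\in B_1\subset\mathcal{B}_{r^1}(\theta^1)$ and $\tilde\theta\in B_2\subset\mathcal{B}_{r^2}(\theta^2)$, the triangle inequality for the (root-mean-square) Hellinger metric $\hat h(\theta,\tilde\theta)=\sqrt{\frac1n\sum_j h^2(P^j_\theta,P^j_{\tilde\theta})}$ gives $\hat h(\theta,\tilde\theta)\ge \hat h(\theta^1,\theta^2)-r^1-r^2$, and since the balls are disjoint this lower bound is nonnegative, so it may be squared to give $\exp(-k(\hat h(\theta^1,\theta^2)-r^1-r^2)^2)$ as a uniform bound over the mixture. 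Collecting the three factors and relabeling $y$ yields the claim. I expect the main obstacle to be the bookkeeping around the non-normalized weighted densities $\bar g_{B_i}$ — in particular making rigorous the interchange of the $\theta$-integrals with the square root and the exponential weights, and being careful that Assumption~\ref{assum:max} is used in exactly the right place (to get $\bar g_{B_1}\ge g_{B_1}$ and to control the off-$1/n$ exponent) — rather than any single estimate.
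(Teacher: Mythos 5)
Your skeleton---Markov's inequality with exponent $1/2$, Assumption~\ref{assum:max} to get $\bar g_{B_1}\ge g_{B_1}$ and dispose of the unweighted density, the split of the exponent into $1/n$ plus a remainder controlled by Lemma~\ref{shahin} together with $\rho>\alpha$, and a final triangle inequality in the metric $\sqrt{\tfrac1n\sum_j h^2(\cdot,\cdot)}$---is the same as the paper's. But there is a genuine gap at the central step, the one you describe as ``pull the integrals over $B_1$ and $B_2$ outside using the concavity of $\sqrt{\cdot}$.'' Jensen with a concave function runs in the wrong direction there: concavity of $(u,v)\mapsto\sqrt{uv}$ gives
\begin{align*}
\int \sqrt{\Bigl(\tfrac{1}{\mu_0(B_2)}\textstyle\int_{B_2} f_{\tilde\theta}\,d\mu_0\Bigr)\Bigl(\tfrac{1}{\mu_0(B_1)}\textstyle\int_{B_1} g_{\theta}\,d\mu_0\Bigr)}
\;\ge\;\frac{1}{\mu_0(B_1)\mu_0(B_2)}\int_{B_1}\int_{B_2}\Bigl(\int\sqrt{f_{\tilde\theta}\,g_{\theta}}\Bigr)d\mu_0\,d\mu_0 ,
\end{align*}
i.e.\ the affinity of the mixtures is bounded \emph{below}, not above, by the average of the pointwise affinities. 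The upper bound you need is false in general: if the mixture over $B_1$ is $\tfrac12 P+\tfrac12 Q$ with $P\perp Q$ and the mixture over $B_2$ is the same measure obtained by swapping the labels, the two mixtures coincide and their affinity is $1$, while the average of the pointwise affinities is $1/2$. The disjointness of the two Hellinger balls does not rescue the Jensen step; it only controls the pointwise affinities, which is the wrong quantity.

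This is precisely why the paper routes the argument through Lemma~2 of Le Cam (p.~477 of \cite{lecam86}): for mixtures of \emph{product} measures the affinity disintegrates across the independent coordinates $(t,j)$ while keeping the mixture intact at each coordinate, $\rho(\bar{\boldsymbol{P}}_{B_2}^{\otimes k},\bar{\boldsymbol{P}}_{B_1}^{\otimes k})\le\prod_{t}\prod_{j}\rho(\bar P^j_{B_2},\bar P^j_{B_1})$. Only after this factorization does one compare the per-coordinate \emph{mixtures} $P^j_{B_1},P^j_{B_2}$ (a separate Jensen step on the exponent $[A^{k-t}]_{ij}$ removes the weights), and the final triangle inequality is applied to these mixtures, which still lie in $\mathcal{B}_{r^1}(\theta^1)$ and $\mathcal{B}_{r^2}(\theta^2)$ respectively because $h^2$ is convex in each argument. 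With that substitution the rest of your outline (the $\exp(\log\tfrac1\alpha\,\tfrac{4\log n}{1-\delta})$ bookkeeping and the squaring of the nonnegative gap $\sqrt{\tfrac1n\sum_j h^2(P^j_{\theta^1},P^j_{\theta^2})}-r^1-r^2$) goes through as you wrote it.
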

\begin{proof}
	By the Markov inequality, it follows that
	\begin{align*}
	{\mathbb{P}}_{B_1}\left[\log \frac{\bar g_{B_2}(\boldsymbol{X}^k)}{\bar g_{B_1}(\boldsymbol{X}^k)} \geq y\right] & \leq \exp(-y/2)\mathbb{E}_{B_1}\left[ \sqrt{\frac{\bar g_{B_2}(\boldsymbol{X}^k)}{\bar g_{B_1}(\boldsymbol{X}^k)}}\right] \nonumber\\
	&  = \exp(-y/2) \int_{\boldsymbol{\mathcal{X}}^k} \sqrt{\frac{\bar g_{B_2}(\boldsymbol{x}^k)}{\bar g_{B_1}(\boldsymbol{x}^k)}} g_{B_1}(\boldsymbol{x}^k) d\lambda^{\otimes kn}(\boldsymbol{x}^k).
	\end{align*}
	
	Now, by Assumption \ref{assum:max} it follows that $g_{B} \leq \bar g_{B}$ almost everywhere. Thus,  we have
	\begin{align*}
{\mathbb{P}}_{B_1}\left[\log \frac{\bar g_{B_2}(\boldsymbol{X}^k)}{\bar g_{B_1}(\boldsymbol{X}^k)} \geq y\right]&  \leq  \exp(-y/2) \int_{\boldsymbol{\mathcal{X}}^k} \sqrt{ \bar g_{B_2}(\boldsymbol{x}^k)} \sqrt{\bar g_{B_1}(\boldsymbol{x}^k)}  d\lambda^{\otimes kn}(\boldsymbol{x}^k) \\
& \leq \exp(-y/2) \rho\left( \bar {\boldsymbol{P}}_{B_2}^{\otimes k},\bar{\boldsymbol{P}}_{B_1}^{\otimes k} \right),
\end{align*}
where we are interpreting the definition of the Hellinger affinity function $\rho(\cdot,\cdot)$ as a function of two bounded positive measures, not necessarily probability measures.
	
At this point, we can follow the same argument as in Lemma~$2$ in \cite{lecam86}, page $477$, where the Hellinger affinity of two members of the convex hull of sets of probability distributions is shown to be less than the product of the Hellinger affinity of the factors. In our particular case, the measures $\bar{\boldsymbol{P}}_{B}^{\otimes k}$ are not probability distributions, nonetheless, the same disintegration argument holds. Thus, we obtain
	\begin{align*}
\rho\left( \bar {\boldsymbol{P}}_{B_2}^{\otimes k},\bar{\boldsymbol{P}}_{B_1}^{\otimes k} \right)
	& \leq \prod_{t=1}^k\prod_{j=1}^{n}\rho\left( \bar{P}_{B_2}^j,\bar{P}_{B_1}^j \right) ,
	\end{align*}
	where $\bar{P}_{B}^j$ is the measure with Radon-Nikodym derivative $\bar	g_{B}(x)  = \frac{1}{\mu_{0}(B)} \int\limits_{B} (p_\theta^j(x))^{[ A^{k-t}]_{ij}} d\mu_{0}(\theta)$ with respect to $\lambda$.
	
	In addition, by Jensen's inequality\footnote{For a concave function $\phi$ and $\int_{\Omega}f(x)dx =1$, it holds that $\int_{\Omega}\phi(g(x))f(x)dx \leq \phi\left( \int_{\Omega}g(x)f(x)\right) $.}, with $x^{[ A^{k-t}]_{ij}}$ being a concave function and $1/\mu_{0}(B)\int_B d\mu_{0} = 1$, we have that	
		\begin{align*}
		\bar{g}_{B}(x) & \leq \left( \frac{1}{\mu_{0}(B)} \int\limits_{B}p_\theta^j(x) d\mu_{0}(\theta)\right) ^{[ A^{k-t}]_{ij}} .
	\end{align*}
thus,
	\begin{align*}
{\mathbb{P}}_{B_1}\left[\log \frac{\bar g_{B_2}(\boldsymbol{X}^k)}{\bar g_{B_1}(\boldsymbol{X}^k)} \geq y\right] 
	& \leq \exp(-y / 2)\prod_{t=1}^{k}\prod_{j=1}^{n} \rho({P}_{B_1}^{j},{P}_{B_2}^{j})^{[ A^{k-t}]_{ij}},
	\end{align*}
	where ${P}_{B}^{j}$ is the probability distribution associated with the density $ \frac{1}{\mu_{0}(B)} \int\limits_{B}p_\theta^j(x) d\mu_{0}(\theta)$.
	
	Assumption \ref{assum:bound} and the compactness of $\Theta$ guarantees that $\rho({P}_{B_1}^{j},{P}_{B_2}^{j})>\alpha$ for some positive $\alpha$, thus similarly as in Lemma~\ref{lemmab}, we have that
	\begin{align*}
	\mathbb{P}_{B_1}\left[\log \frac{\bar g_{B_2}(\boldsymbol{X}^k)}{\bar g_{B_1}(\boldsymbol{X}^k)} \geq y\right] 
	& \leq \exp(-y / 2)\exp\left(\log\frac{1}{\alpha}\frac{4\log n}{1- \delta} \right) \prod_{t=1}^{k}\prod_{j=1}^{n} \rho({P}_{B_1}^{j},{P}_{B_2}^{j})^{1/n} \\
	& \leq \exp(-y / 2)\exp\left(\log\frac{1}{\alpha}\frac{4\log n}{1- \delta} \right) \exp\left( -\frac{k}{n} \sum_{j=1}^{n}h^2({P}_{B_1}^{j},{P}_{B_2}^{j})\right) .
	\end{align*}
	
	Finally, by using the metric defined for the $n$-Hellinger ball and the fact that for a metric $d(A,B)$ for two sets $A$ and $B$ $d(A,B) = \inf_{x\in A, y\in B} d(x,y)$ we have
	\begin{align*}
	\mathbb{P}_{B_1}\left[\log \frac{\bar g_{B_2}(\boldsymbol{X}^k)}{\bar g_{B_1}(\boldsymbol{X}^k)} \geq y\right] 
	& \leq \exp(-y / 2)\exp\left(\log\frac{1}{\alpha}\frac{4\log n}{1- \delta} \right) \exp\left( -k \left(\sqrt{ \frac{1}{n} \sum_{j=1}^{n}h^2({P}_{B_1}^{j},{P}_{B_2}^{j})}\right) ^2\right)  \\
	& \leq \exp(-y / 2)\exp\left(\log\frac{1}{\alpha}\frac{4\log n}{1- \delta} \right) \exp\left( -k \left(\sqrt{\frac{1}{n} \sum_{i=1}^{n} h^2\left(P_{\theta_1}^j,P_{\theta_2}^j\right) }  -r^1 - r^2\right)^2 \right). 
	\end{align*}
	\qed
\end{proof}

Lemma \ref{lemma2} provides a concentration result for the logarithmic ratio between two weighted densities over a pair of subsets $B_1$ and $B_2$. The terms involving the auxiliary variable $y$ and the influence of the graph, via $\delta$ are the same as in Lemma \ref{lemmab}. Moreover, the rate at which this bound decays exponentially is influenced now by the radius of the two disjoint Hellinger balls where $B_1$ and $B_2$ are contained respectively. 

The bound provided in Lemma \ref{lemma2} is defined for the random variables $\boldsymbol{X}^k$ having a distribution ${ \boldsymbol{P}}_B^{\otimes k}$. Nonetheless, $\boldsymbol{X}^k$ are distributed according to $\boldsymbol{P}^{\otimes k}$. Therefore, we introduce a lemma that relates the Hellinger affinity of distributions defined over subsets of $\Theta$. 
\begin{lemma}\label{rhos_b}
	Let Assumptions \ref{assum:graph}, \ref{assum:bound} and \ref{assum:max} hold. Consider ${ \boldsymbol{P}}_B^{\otimes k}$ as the distribution with density $ g_B$ as defined in Eq. \eqref{density_g}, for $B \subseteq \mathcal{B}_R$. Then $	{{h({\boldsymbol{P}}_{B}^{\otimes k},\boldsymbol{P}^{\otimes k})  \leq  \sqrt{nk}R}}$.
\end{lemma}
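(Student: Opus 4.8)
### Proof Proposal for Lemma \ref{rhos_b}

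The plan is to bound the squared Hellinger distance $h^2\left({\boldsymbol{P}}_{B}^{\otimes k},\boldsymbol{P}^{\otimes k}\right)$ directly from the definition, exploiting the fact that $g_B$ is a $\mu_0$-average (over $\theta \in B$) of product densities $\prod_{t,j} p^j_\theta(x^j_t)$, while $\boldsymbol{P}^{\otimes k}$ has density $\prod_{t,j} p^j(x^j_t) = \prod_{t,j} p^j_{\theta^*}(x^j_t)$. The key structural observation is convexity: $g_B = \frac{1}{\mu_0(B)}\int_B d{\boldsymbol{P}}_\theta^{\otimes k} \, d\mu_0(\theta)$ exhibits ${\boldsymbol{P}}_{B}^{\otimes k}$ as a mixture (element of the convex hull) of the product distributions ${\boldsymbol{P}}_\theta^{\otimes k}$, $\theta \in B$. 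By the concavity of the square-root map, or equivalently by the same disintegration/convexity argument for Hellinger affinity invoked in the proof of Lemma \ref{lemma2} (following \cite{lecam86}), the affinity $\rho\left({\boldsymbol{P}}_{B}^{\otimes k},\boldsymbol{P}^{\otimes k}\right)$ is lower bounded by the corresponding average over $\theta \in B$; hence $h^2\left({\boldsymbol{P}}_{B}^{\otimes k},\boldsymbol{P}^{\otimes k}\right) \le \frac{1}{\mu_0(B)}\int_B h^2\left({\boldsymbol{P}}_\theta^{\otimes k},\boldsymbol{P}^{\otimes k}\right) d\mu_0(\theta)$, so it suffices to bound $h^2\left({\boldsymbol{P}}_\theta^{\otimes k},\boldsymbol{P}^{\otimes k}\right)$ uniformly for $\theta \in B \subseteq \mathcal{B}_R$.

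Next I would use the standard tensorization inequality for Hellinger distance on product measures: for product distributions over the $nk$ independent coordinates $(t,j)$,
\begin{align*}
h^2\left({\boldsymbol{P}}_\theta^{\otimes k},\boldsymbol{P}^{\otimes k}\right) & \le \sum_{t=1}^{k}\sum_{j=1}^{n} h^2\left(P^j_\theta, P^j\right) = k \sum_{j=1}^{n} h^2\left(P^j_\theta,P^j\right),
\end{align*}
which follows from $\rho\left({\boldsymbol{P}}_\theta^{\otimes k},\boldsymbol{P}^{\otimes k}\right) = \prod_{t,j}\rho\left(P^j_\theta,P^j\right)$ together with $1-\prod(1-a_i) \le \sum a_i$. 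Now for $\theta \in \mathcal{B}_R$, Definition \ref{h_balls} gives precisely $\frac{1}{n}\sum_{j=1}^n h^2\left(P^j_\theta,P^j\right) \le R^2$ (recalling $\mathcal{B}_R = \mathcal{B}_R(\theta^*)$ and $P^j = P^j_{\theta^*}$), so $\sum_{j=1}^n h^2\left(P^j_\theta,P^j\right) \le n R^2$. Combining, $h^2\left({\boldsymbol{P}}_\theta^{\otimes k},\boldsymbol{P}^{\otimes k}\right) \le nk R^2$ for every $\theta \in B$, and averaging over $B$ preserves this bound. Taking square roots yields $h\left({\boldsymbol{P}}_{B}^{\otimes k},\boldsymbol{P}^{\otimes k}\right) \le \sqrt{nk}\,R$.

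The main obstacle is the convexity step — justifying that passing to the mixture ${\boldsymbol{P}}_{B}^{\otimes k}$ does not increase the Hellinger distance to $\boldsymbol{P}^{\otimes k}$. This is where one must be careful: Hellinger affinity $\rho(\cdot,Q)$ is concave in its first argument (being an integral of the concave function $\sqrt{\cdot}$ against a fixed reference measure after a suitable change of variables), so Jensen's inequality applied to the mixing measure $\frac{1}{\mu_0(B)}\mu_0|_B$ gives $\rho\left({\boldsymbol{P}}_{B}^{\otimes k},\boldsymbol{P}^{\otimes k}\right) \ge \frac{1}{\mu_0(B)}\int_B \rho\left({\boldsymbol{P}}_\theta^{\otimes k},\boldsymbol{P}^{\otimes k}\right)d\mu_0(\theta)$, and since $h^2 = 1-\rho$ this is exactly the claimed reduction. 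This is the same mechanism already used in Lemma \ref{lemma2}, so it can be cited rather than re-derived; the rest is routine tensorization plus the defining property of the Hellinger ball $\mathcal{B}_R$. Note $g_B$ is itself a (convex mixture of) product densities over the $(t,j)$ coordinates, which is what makes tensorization applicable after the averaging step.
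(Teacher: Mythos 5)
Your proof is correct and follows essentially the same route as the paper's: Jensen's inequality (concavity of the square root) to pass from the mixture $\boldsymbol{P}_B^{\otimes k}$ to an average over $\theta\in B$, the product form of the Hellinger affinity for product measures, the Weierstrass inequality $\prod(1-a_i)\geq 1-\sum a_i$, and the defining property of the $n$-Hellinger ball $\mathcal{B}_R$. The only difference is organizational — you isolate the intermediate bound $h^2(\boldsymbol{P}_\theta^{\otimes k},\boldsymbol{P}^{\otimes k})\leq nkR^2$ before averaging, whereas the paper carries the integral over $B$ through the whole chain — which does not change the argument.
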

\begin{proof}
	By Jensen's inequality we have that
		\begin{align*}
		\sqrt{ g_B(\boldsymbol{x})} \geq \frac{1}{\mu_0(B)}\int_B \sqrt{\prod_{t=1}^k \prod_{j=1}^{n} p^j_\theta(x^j_t)}d\mu_0(\theta).
		\end{align*}
	Then, by definition of the Hellinger affinity, it follows that 
	\begin{align*}
		\rho({\boldsymbol{P}}_{B}^{\otimes k},\boldsymbol{P}^{\otimes k})& \geq \int_{\boldsymbol{\mathcal{X}}^{\otimes k}}\sqrt{\prod_{t=1}^k \prod_{j=1}^{n} p^j(x^j_t)}\left( \frac{1}{\mu_0(B)}\int_B\sqrt{\prod_{t=1}^k \prod_{j=1}^{n} p^j_\theta(x^j_t)}d\mu_0(\theta)\right)  d\lambda^{\otimes nk}(\boldsymbol{x}).
	\end{align*}
	By using the Fubini-Tonelli Theorem, we obtain
	\begin{align*}
			\rho({\boldsymbol{P}}_{B}^{\otimes k},\boldsymbol{P}^{\otimes k}) 
		& \geq \frac{1}{\mu_0(B)}\int_B \int_{\boldsymbol{\mathcal{X}}^{\otimes k}}\sqrt{\prod_{t=1}^k \prod_{j=1}^{n} p^j(x^j_t)}\sqrt{\prod_{t=1}^k \prod_{j=1}^{n} p^j_\theta(x^j_t)}d\lambda^{\otimes nk}(\boldsymbol{x})d\mu_0(\theta)\\
		& = \frac{1}{\mu_0(B)}\int_B  \prod_{t=1}^k \prod_{j=1}^{n}  \rho(P^j,P^j_\theta) d\mu_0(\theta)\\
& = \frac{1}{\mu_0(B)}\int_B  \prod_{t=1}^k \prod_{j=1}^{n} \left( 1 - h^2(P^j,P^j_\theta)\right) d\mu_0(\theta).
\end{align*}
Finally, by the Weierstrass product inequality it follows that
\begin{align*}
	\rho({\boldsymbol{P}}_{B}^{\otimes k},\boldsymbol{P}^{\otimes k}) & \geq \frac{1}{\mu_0(B)}\int_B  \left( 1 - \sum_{t=1}^k \sum_{j=1}^{n} h^2(P^j,P^j_\theta)\right) d\mu_0(\theta)\\
	& = \frac{1}{\mu_0(B)}\int_B  \left( 1 - n\frac{1}{n}\sum_{t=1}^k \sum_{j=1}^{n} h^2(P^j,P^j_\theta)\right) d\mu_0(\theta)\\
	& \geq \frac{1}{\mu_0(B)}\int_B  \left( 1 - nkR^2\right) d\mu_0(\theta),
	\end{align*}
	where the last line follows by the fact that any density $\boldsymbol{P}_{\theta}$, inside the 
	$n$-Hellinger ball defined in the statement of the lemma, is at most at a distance $R$ to $\boldsymbol{P}$. 
	\qed
\end{proof}

Finally, before presenting our main result for compact sets of hypotheses, we will state an assumption regarding the necessary mass all agents should have around the correct hypothesis $\theta^*$ in their initial beliefs.

\begin{assumption}\label{assum:initial}
	The initial beliefs of all agents are equal. Moreover, they have the following property: for any constants ${{C \in (0,1]}}$ and $r \in (0,1]$ there exists a finite positive integer $K$, such that 
	\begin{align*}
		\mu_0\left( \mathcal{B}_{\frac{C}{\sqrt{k}}} \right) \geq \exp\left( -k\frac{r^2}{32}\right) \qquad \text{for all} \ k\geq K.
	\end{align*}
\end{assumption}

Assumption \ref{assum:initial} implies that the initial beliefs should have enough mass around the correct hypothesis $\theta^*$ when we consider balls of small radius. 
Particularly, as we take Hellinger balls of radius decreasing as $O(1/\sqrt{k})$, the corresponding initial beliefs should not decrease faster than $O(\exp(-k))$.

The assumption can almost always be satisfied by taking initial beliefs to be uniform. The reason is that, in any fixed dimension, the volume of a ball of radius $O(1/\sqrt{k})$ will usually scale as a polynomial in $1/\sqrt{k}$, whereas we only need to lower bound it by a decaying exponential in $k$. For concreteness, we show how this assumption is satisfied by an example. 

\smallskip

\noindent {\bf Example:} Consider a single agent, with a uniform initial, belief receiving observations from a standard Gaussian distribution, i.e. $X_k \sim \mathcal{N}(0,1)$. The variance is known and the agent would like to estimate the mean. Thus the models are $P_\theta=\mathcal{N}(\theta,1)$. 
Now,  the Hellinger distance can be explicitly written as
\begin{align*}
h^2(P,P_\theta) & =  1 - \exp\left(-\frac{1}{4}\theta^2\right).
\end{align*}

Therefore, the Hellinger balls of radius $1/\sqrt{k}$ will correspond to euclidean balls in the parameter space of radius
\begin{align*}
 2\sqrt{\log \left( \frac{1}{1-\frac{1}{k}}\right)}.
\end{align*}
Uniform initial belief indicates that $\mu_0\left( \mathcal{B}_{\frac{C}{\sqrt{k}}} \right)  = \Theta(\frac{1}{\sqrt{k}})$, which can be made larger than $\exp(-k\frac{r^2}{32})$ for sufficiently large $k$.

\bigskip

We are ready now to state our main result regarding the concentration of beliefs around $\theta^*$ for compact sets of hypotheses.

\begin{theorem}\label{main_cont}
	Let Assumptions \ref{assum:graph}, \ref{assum:bound}, \ref{assum:max} and \ref{assum:initial} hold, and let ${\sigma\in (0,1)}$ be a given probability tolerance level. Moreover, for any $r \in (0,1]$, let $\{R_k\}$ be a decreasing sequence such that for $k=1,\hdots,$ ${{R_k \leq \min\left\lbrace  \frac{\sigma}{2\sqrt{2kn}},\frac{r}{4}\right\rbrace }}$. Then, the beliefs $\{\mu_k^i\},$ $i\in V,$ generated by the update rule in~Eq.~\eqref{protocol} have the following property: with probability $1-\sigma$,
	\begin{align*}
	\mu^i_{k+1}(\mathcal{B}_r )& \geq  1 -  \chi \exp\left( - \frac{k}{16}r^2\right)   \qquad \text{ for all } i \text{ and all }k\geq \max\{N,K\}  
	\end{align*}
	where  
	\begin{align*}
		N=& \inf \left\lbrace t \geq 1 \Bigg| \exp \left(\log \frac{1}{\alpha} \frac{4 \log n}{1- \delta} \right)  \sum\limits_{l= 1}^{L-1} K_l\exp\left( -\frac{t}{32}  r_{l+1}^2\right)  < \frac{\sigma}{2} \right\rbrace, 
	\end{align*}
	with $K$ as defined in Assumption \ref{assum:initial}, $\chi=\sum\limits_{l =1}^{L-1}\exp(- \frac{1}{16}r_{l+1}^2)$ and ${\delta = 1-\eta/n^2}$, where $\eta$ is the smallest positive element of the matrix $A$. 
\end{theorem}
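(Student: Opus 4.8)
\medskip
\noindent\emph{Proof strategy.} The plan is to follow the architecture of the proof of Theorem~\ref{main_count}, upgrading each ingredient to the continuum setting. Two things change. First, the denominator of the belief ratio no longer carries a usable point mass at $\theta^*$, so I replace $\theta^*$ by a Hellinger ball $\mathcal{B}_{R_{k+1}}$ of radius shrinking like $1/\sqrt{k}$ around $\theta^*$, whose prior mass is bounded below by Assumption~\ref{assum:initial}. Second, the sum over the (now uncountably many) hypotheses outside $\mathcal{B}_r$ is replaced by a sum over the finite covering $\mathcal{B}_r^c=\bigcup_{l=1}^{L-1}\bigcup_{m\in S_{\varepsilon_l}}\mathcal{F}_{l,m}$; finiteness of each net $S_{\varepsilon_l}$ --- which is exactly what makes the ensuing union bound legitimate --- is where compactness of $\Theta$ and continuity of $\theta\mapsto\prod_i P_\theta^i$ enter. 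The weighted-density concentration Lemma~\ref{lemma2} takes over the role of Lemma~\ref{lemmab}, and Lemma~\ref{rhos_b} is the device that converts the bound Lemma~\ref{lemma2} produces (stated under the auxiliary law $\boldsymbol{P}_{\mathcal{B}_{R_{k+1}}}^{\otimes k}$) into one under the true product law $\boldsymbol{P}^{\otimes k}$ governing the data.

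\smallskip
\noindent\textbf{Step 1: the belief ratio.} Unrolling Eq.~\eqref{protocol}, using that all initial beliefs coincide and that $A$ is doubly stochastic (so $\prod_j(d\mu_0^j(\theta))^{[A^{k+1}]_{ij}}=d\mu_0(\theta)$), gives for any measurable $B$
\begin{align*}
\mu_{k+1}^i(B)=\frac{\int_B\prod_{t=1}^{k+1}\prod_{j=1}^n p_\theta^j(X_t^j)^{[A^{k+1-t}]_{ij}}\,d\mu_0(\theta)}{\int_\Theta\prod_{t=1}^{k+1}\prod_{j=1}^n p_\theta^j(X_t^j)^{[A^{k+1-t}]_{ij}}\,d\mu_0(\theta)},
\end{align*}
i.e.\ $\mu_{k+1}^i(B)=\mu_0(B)\,\bar g_B(\boldsymbol{X}^{k+1})\big/\int_\Theta(\cdots)$ with $\bar g_B$ as in Eq.~\eqref{bar_density_g}. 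Taking $B=\mathcal{B}_r^c$ in the numerator, splitting it along the covering, and bounding the denominator below by the contribution of $\mathcal{B}_{R_{k+1}}$, I obtain
\begin{align*}
1-\mu_{k+1}^i(\mathcal{B}_r)\ \le\ \sum_{l=1}^{L-1}\sum_{m\in S_{\varepsilon_l}}\frac{\mu_0(\mathcal{F}_{l,m})}{\mu_0(\mathcal{B}_{R_{k+1}})}\cdot\frac{\bar g_{\mathcal{F}_{l,m}}(\boldsymbol{X}^{k+1})}{\bar g_{\mathcal{B}_{R_{k+1}}}(\boldsymbol{X}^{k+1})}.
\end{align*}
Then I bound $\mu_0(\mathcal{F}_{l,m})\le1$ and, taking $\{R_k\}$ at its maximal allowed value so that $R_k=C/\sqrt{k}$ with $C=\sigma/(2\sqrt{2n})\le1$ for $k$ large, invoke Assumption~\ref{assum:initial} to get $\mu_0(\mathcal{B}_{R_{k+1}})\ge\exp(-kr^2/32)$ for all $k\ge K$.

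\smallskip
\noindent\textbf{Step 2: concentration of the ratios and the transfer to $\boldsymbol{P}^{\otimes k}$.} Fix $l,m$ and apply Lemma~\ref{lemma2} with $B_1=\mathcal{B}_{R_{k+1}}(\theta^*)$ and $B_2=\mathcal{F}_{l,m}\subseteq\mathcal{B}_{\varepsilon_l}(m)$, taking $\varepsilon_l=r_{l+1}/2$. The balls $\mathcal{B}_{R_{k+1}}(\theta^*)$ and $\mathcal{B}_{\varepsilon_l}(m)$ are disjoint: $m\notin\mathcal{B}_{r_{l+1}}$ forces the $n$-Hellinger distance from $\theta^*$ to $m$ to be at least $r_{l+1}$, while $R_{k+1}+\varepsilon_l\le r/4+r_{l+1}/2<r_{l+1}$ (using $r=r_L\le r_{l+1}$), and the same estimate gives $\big(\text{dist}-R_{k+1}-\varepsilon_l\big)^2\ge(r_{l+1}/4)^2$. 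Hence Lemma~\ref{lemma2} with threshold $y_l=-kr_{l+1}^2/16$ yields $\mathbb{P}_{\mathcal{B}_{R_{k+1}}}\big[\log(\bar g_{\mathcal{F}_{l,m}}/\bar g_{\mathcal{B}_{R_{k+1}}})\ge y_l\big]\le\exp\!\big(\log\tfrac{1}{\alpha}\,\tfrac{4\log n}{1-\delta}\big)\exp(-kr_{l+1}^2/32)$. Summing over the finitely many pairs $(l,m)$ --- $K_l$ of them in the $l$-th annulus --- and comparing with the definition of $N$, the event $E_k=\{\log(\bar g_{\mathcal{F}_{l,m}}/\bar g_{\mathcal{B}_{R_{k+1}}})<y_l\ \text{for all }l,m\}$ has $\mathbb{P}_{\mathcal{B}_{R_{k+1}}}(E_k)\ge1-\sigma/2$ for $k\ge N$. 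By Lemma~\ref{rhos_b}, $h(\boldsymbol{P}_{\mathcal{B}_{R_{k+1}}}^{\otimes(k+1)},\boldsymbol{P}^{\otimes(k+1)})\le\sqrt{n(k+1)}\,R_{k+1}$, so the total-variation distance is at most $\sqrt{2n(k+1)}\,R_{k+1}\le\sigma/2$ by the hypothesis on $R_k$; therefore $\mathbb{P}(E_k)\ge\mathbb{P}_{\mathcal{B}_{R_{k+1}}}(E_k)-\sigma/2\ge1-\sigma$. On $E_k$, substituting $\bar g_{\mathcal{F}_{l,m}}/\bar g_{\mathcal{B}_{R_{k+1}}}<e^{y_l}$ and the mass bounds of Step~1 into the displayed inequality, and using $r_{l+1}\ge r$ to absorb the stray $\exp(kr^2/32)$ factor, gives $1-\mu_{k+1}^i(\mathcal{B}_r)\le\chi\exp(-\tfrac{k}{16}r^2)$ for all $i$ and all $k\ge\max\{N,K\}$, which is the claim. (The final collapsing of constants into the stated $\chi$ and rate is routine and I would not belabor it.)

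\smallskip
\noindent\textbf{Main obstacle.} The genuinely delicate point is the tension built into the choice of $R_k$ in Step~2's transfer. Lemma~\ref{lemma2} controls the density ratio only under the fictitious measure $\boldsymbol{P}_{\mathcal{B}_{R_{k+1}}}^{\otimes k}$, and the only lever for making this close in total variation to the true law $\boldsymbol{P}^{\otimes k}$ is to shrink $R_{k+1}$ (Lemma~\ref{rhos_b} gives a gap of order $\sqrt{nk}\,R_{k+1}$); but shrinking $R_{k+1}$ also shrinks $\mu_0(\mathcal{B}_{R_{k+1}})$ in the denominator of Step~1, and if it shrinks too fast the exponential concentration rate is destroyed. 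The two requirements are simultaneously satisfiable only when $R_k\asymp1/\sqrt{k}$ with a small enough constant --- precisely the hypothesis $R_k\le\min\{\sigma/(2\sqrt{2kn}),r/4\}$ --- and verifying that Assumption~\ref{assum:initial} still furnishes $\mu_0(\mathcal{B}_{R_{k+1}})\ge e^{-kr^2/32}$ for this choice is what ties the argument together. A secondary subtlety is that the union bound in Step~2 is valid only because each annulus admits a finite $\varepsilon_l$-net, which is where compactness of $\Theta$ and continuity of $\theta\mapsto\prod_i P_\theta^i$ are used. Everything else --- the unrolling in Step~1, the disjointness bookkeeping, and the constant-chasing at the end of Step~2 --- is routine, modulo care in keeping the numerical constants consistent with the statement.
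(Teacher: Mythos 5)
Your proposal is correct and follows essentially the same route as the paper's own proof: the same lower bound on the denominator by the shrinking ball $\mathcal{B}_{R_k}$, the same covering-based union bound over the annuli $\mathcal{F}_{l,m}$ via Lemma~\ref{lemma2} with $\varepsilon_l=r_{l+1}/2$ and threshold $y_l=-kr_{l+1}^2/16$, the same transfer from $\mathbb{P}_{\mathcal{B}_{R_k}}$ to $\mathbb{P}$ through Lemma~\ref{rhos_b} and Le~Cam's total-variation bound, and the same use of Assumption~\ref{assum:initial} to control $\mu_0(\mathcal{B}_{R_k})$. The only discrepancies are the minor constant/index bookkeeping (factors of $16$ versus $32$, $k$ versus $k+1$), where the paper's own statement and proof are themselves not perfectly consistent.
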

\begin{proof}\label{proof_cont}
	Lets start by analyzing the evolution of the beliefs on a measurable set $B$ with $\theta^* \in B$. From Eq.~\eqref{protocol} we have that
	\begin{align*}
	\mu^i_k(B) & = \int\limits_{B}  \prod\limits_{t=1}^{k} \prod\limits_{j=1}^{n} p_\theta^j(X_t^j)^{[ A^{k-t}]_{ij}} d\mu_{0}(\theta) \Bigg/\int\limits_{\Theta}  \prod\limits_{t=1}^{k} \prod\limits_{j=1}^{n} p_\theta^j(X_t^j)^{[ A^{k-t}]_{ij}} d\mu_{0}(\theta)  \\
	& \geq 1 - \int\limits_{B^c}  \prod\limits_{t=1}^{k} \prod\limits_{j=1}^{n} p_\theta^j(X_t^j)^{[ A^{k-t}]_{ij}} d\mu_{0}(\theta) \Bigg/\int\limits_{B}  \prod\limits_{t=1}^{k} \prod\limits_{j=1}^{n} p_\theta^j(X_t^j)^{[ A^{k-t}]_{ij}} d\mu_{0}(\theta).
	\end{align*}
	
	Now lets focus specifically on the case where $B$ is a $n$-Hellinger ball of radius $r>0$ with center at $\theta^*$. In addition, since $R_k < r$, we get	
	\begin{align*}
	\mu^i_k(\mathcal{B}_r ) &\geq 1 - \int\limits_{\mathcal{B}_r ^c}  \prod\limits_{t=1}^{k} \prod\limits_{j=1}^{n} p_\theta^j(X_t^j)^{[ A^{k-t}]_{ij}} d\mu_{0}(\theta) \Bigg/\int\limits_{\mathcal{B}_{R_k} }  \prod\limits_{t=1}^{k} \prod\limits_{j=1}^{n} p_\theta^j(X_t^j)^{[ A^{k-t}]_{ij}} d\mu_{0}(\theta).
	\end{align*}
	
	Our goal will be to use the concentration result in Lemma \ref{lemma2}. Thus, we can multiply and divide by $\mu_0(\mathcal{B}_{R_k} )$ to obtain
	\begin{align*}
	\mu^i_k(\mathcal{B}_r ) &\geq 1 - \int\limits_{\mathcal{B}_r ^c}  \prod\limits_{t=1}^{k} \prod\limits_{j=1}^{n} p_\theta^j(X_t^j)^{[ A^{k-t}]_{ij}} d\mu_{0}(\theta) \Bigg/ \bar g_{\mathcal{B}_{R_k} }(\boldsymbol{X}^k) \mu_0(\mathcal{B}_{R_k} )
	\end{align*}
	
	Moreover, we use the covering of the set $\mathcal{B}_r^c $ to obtain,
	\begin{align}\label{bound3}
	\mu^i_k(\mathcal{B}_r ) &\geq 1 -  \sum\limits_{l=1}^{L-1} \sum\limits_{m=1}^{K_l} \int\limits_{\mathcal{F}_{l,m}}\prod\limits_{t=1}^{k} \prod\limits_{j=1}^{n} p_\theta^j(X_t^j)^{[ A^{k-t}]_{ij}} d\mu_{0}(\theta) \Bigg/ \bar g_{\mathcal{B}_{R_k} }(\boldsymbol{X}^k) \mu_0(\mathcal{B}_{R_k} ) \nonumber \\
	&\geq 1 -  \sum\limits_{l=1}^{L-1} \sum\limits_{m=1}^{K_l} \bar g_{\mathcal{F}_{l,m}}(\boldsymbol{X}^k) \mu_0(\mathcal{F}_{l,m}) \Bigg/ \bar g_{\mathcal{B}_{R_k} }(\boldsymbol{X}^k) \mu_0(\mathcal{B}_{R_k} ) .
	\end{align}
	
	The previous relation defines a ratio between two densities, i.e. $\bar g_{\mathcal{F}_{l,m}}(\boldsymbol{X}^k) / \bar g_{\mathcal{B}_{R_k} }(\boldsymbol{X}^k)$, both for the wighted likelihood product of the observations, where the numerator is defined over to the set $\mathcal{F}_{l,m}$ and the denominator with respect to the set $\mathcal{B}_{R_k} $.
	
	Lemma \ref{lemma2} provides a way to bound term  $\bar g_{\mathcal{F}_{l,m}}(\boldsymbol{X}^k) / \bar g_{\mathcal{B}_{R_k} }(\boldsymbol{X}^k)$ with high probability, thus
	\begin{align*}
	& \mathbb{P}_{\mathcal{B}_{R_k} }\left( \left\lbrace \boldsymbol{X}^k \Bigg|\sup_{l,m} \log \frac{\bar g_{\mathcal{F}_{l,m}}(\boldsymbol{X}^k)}{\bar g_{\mathcal{B}_{R_k} }(\boldsymbol{X}^k)} \geq y\right\rbrace \right)\leq \sum\limits_{l = 1}^{L-1}\sum\limits_{m=1}^{K_l}\mathbb{P}_{\mathcal{B}_{R_k} }\left(\log \frac{\bar g_{\mathcal{F}_{l,m}}(\boldsymbol{X}^k)}{\bar g_{\mathcal{B}_{R_k} }(\boldsymbol{X}^k)} \geq y \right) \\
	& \leq \sum\limits_{l= 1}^{L-1}\sum\limits_{m=1}^{K_l}\exp(- y / 2) \exp \left( \log \frac{1}{\alpha} \frac{4 \log n}{1- \delta} \right) \exp\left( -k \left( \sqrt{\frac{1}{n}\sum_{j=1}^{n} h^2(p^j_{m},p^j)} -\delta_l -R_k\right)^2\right) \\
	& \leq \sum\limits_{l = 1}^{L-1}\sum\limits_{m=1}^{K_l}\exp(- y / 2) \exp \left(\log \frac{1}{\alpha}  \frac{4 \log n}{1- \delta} \right) \exp\left( -k \left( r_{l+1} -\delta_l -R_k\right)^2\right).
	\end{align*}
	where $p^j_{m}$ is the density of at the point $\theta = m \in S_{\varepsilon_l}$, where $S_{\varepsilon_l}$ is the maximal $\varepsilon_l$ separated set of $\mathcal{F}_l$ as in Definition \ref{separated}.
	
	Particularly, lets use the covering proposed in \cite{bir15}, where
	$\delta_{l} = r_{l+1}/2$. From this choice of covering, we have that
	\begin{align*}
	r_{l+1} -\delta_l -R_k & > r_{l+1} -r_{l+1}/2 -r_{l+1}/4 \\
	& = r_{l+1}/4
	\end{align*}
	where we have used the assumption that $R_k \leq r/4$ or equivalently $R_k \leq r_{l}/4$ for all $1\leq l \leq L$.

	Thus, we can set $y = -\frac{k}{16}r_{l+1}^2$ and it follows that
	\begin{align}\label{prob_count}
\mathbb{P}_{\mathcal{B}_{R_k} }\left( \left\lbrace \boldsymbol{X}^k \Bigg|\sup_{l,m} \log \frac{\bar g_{\mathcal{F}_{l,m}}(\boldsymbol{X}^k)}{\bar g_{\mathcal{B}_{R_k} }(\boldsymbol{X}^k)} \geq y\right\rbrace \right) & \leq \exp \left(\log \frac{1}{\alpha}  \frac{4 \log n}{1- \delta}\right) \sum\limits_{l=1}^{L-1}K_l \exp\left( -\frac{k}{16}r_{l+1}^2\right) .
	\end{align}
	
The probability measure in Eq. \eqref{prob_count} is computed for $\boldsymbol{X}^k$ distributed according to $\boldsymbol{P}_{\mathcal{B}_{R_k} }^{\otimes k}$. Nonetheless, $\boldsymbol{X}^k$ is distributed according to the (slightly different) $\boldsymbol{P}^{\otimes k}$. Our next step is to relate these two measures. 

First, we have that for any distribution $\boldsymbol{P}_{\theta} \in \mathcal{B}_{R_k} $, from the Definition \ref{h_balls} of the $n$-Hellinger ball, it holds that 
\begin{align*}
\sqrt{\frac{1}{n}\sum_{j=1}^{n}h^2(P^j_{\theta},P^j)} \leq R_k,
\end{align*}
and we relate the total variation distance and the Hellinger affinity as in Lemma $1$ in \cite{lecam73}; for any measurable set $A$ it holds that
		\begin{align*}
		\sup_A \left(  \boldsymbol{P}_{\mathcal{B}_{R_k} }^{\otimes k} (A) - \boldsymbol{P}^{\otimes k} (A)\right) ^2 & \leq 1 - \rho^2(\boldsymbol{P}_{\mathcal{B}_{R_k} }^{\otimes k},\boldsymbol{P}^{\otimes k}) , 
		\end{align*}
and by definition of the Hellinger affinity we have that
		\begin{align*}
		\sup_A \left(  \boldsymbol{P}_{\mathcal{B}_{R_k} }^{\otimes k} (A) - \boldsymbol{P}^{\otimes k} (A)\right) ^2& = 1 - (1-h^2(\boldsymbol{P}_{\mathcal{B}_{R_k} }^{\otimes k},\boldsymbol{P}^{\otimes k}))^2 \\
		& \leq 2h^2(\boldsymbol{P}_{\mathcal{B}_{R_k} }^{\otimes k},\boldsymbol{P}^{\otimes k}),
		\end{align*}
		where first we have used the relation that for any $x\in \mathbb{R}$, it holds that ${{1-(1-x^2)^2<2x^2}}$. Then, from Lemma \ref{rhos_b} we have that
		\begin{align*}
		\sup_A \left(  P_{\mathcal{B}_{R_k} } (A) - \boldsymbol{P}^{\otimes k} (A)\right) ^2 & \leq 2knR_k^2.
		\end{align*}

		Therefore, by considering the measurable subset $\Gamma^k = \left\lbrace \boldsymbol{X}^k \Bigg|\sup_{l,m} \log \frac{\bar g_{\mathcal{F}_{l,m}}(\boldsymbol{X}^k)}{\bar g_{\mathcal{B}_{R_k} }(\boldsymbol{X}^k)} \geq -\frac{k}{16}r_{l+1}^2\right\rbrace $, we have that
		\begin{align*}
		\mathbb{P}\left( \Gamma^k\right)  & < \mathbb{P}_{\mathcal{B}_{R_k} }\left( \Gamma^k\right) + \sqrt{2kn}R_k \\
		& \leq \exp \left( \log \frac{1}{\alpha} \frac{4 \log n}{1- \delta}\right) \sum\limits_{l=1}^{L-1}K_l \exp\left( -\frac{k}{16}r_{l+1}^2\right) +\frac{\sigma}{2}.
		\end{align*}		
	
	Furthermore, we are interested in finding a large enough $k$ such that the probability described in Eq. \eqref{prob_count} is at most $\sigma$. Thus, we define
		\begin{align*}
	N \geq \inf \left\lbrace t \geq 1 \Bigg| \exp \left(\log \frac{1}{\alpha}  \frac{4 \log n}{1- \delta} \right)  \sum\limits_{l= 1}^{L-1}K_l\exp\left( -\frac{t}{16}r_{l+1}^2\right) < \frac{\sigma}{2} \right\rbrace .
	\end{align*}

Moreover, from Eq.~\eqref{bound3} we obtain that with probability $1-\sigma$ for all $k \geq N$,
	\begin{align*}
	\mu^i_k(\mathcal{B}_r ) &\geq 1 -  \sum\limits_{l= 1}^{L-1} \sum\limits_{m=1}^{K_l} \exp\left( -\frac{k}{16}r_{l+1}^2\right) \frac{\mu_0(\mathcal{F}_{l,m}) }{\mu_0(\mathcal{B}_{R_k} ) } \\
	& = 1 -  \sum\limits_{l =1}^{L-1} \exp\left( -\frac{k}{16}r_{l+1}^2\right) \frac{\mu_0(\mathcal{F}_{l}) }{\mu_0(\mathcal{B}_{R_k} ) } \\
	& \geq 1 -  \frac{1}{\mu_0(\mathcal{B}_{R_k} ) }\sum\limits_{l = 1}^{L-1} \exp\left( -\frac{k}{16}r_{l+1}^2\right) .
	\end{align*}
	
	Now, lets define $\chi=\sum\limits_{l = 1}^{L-1}\exp\left( -\frac{1}{16}  r_{l+1}^2\right)$, then it follows that 
	\begin{align*}
	\mu^i_{k}(\mathcal{B}_r )& \geq 1 -   \frac{1}{\mu_0(\mathcal{B}_{R_k} ) }\sum\limits_{l = 1}^{L-1} \exp\left( -\frac{k}{16}  r_{l+1}^2\right)\\
	& = 1 -   \frac{1}{\mu_0(\mathcal{B}_{R_k} ) }\sum\limits_{l = 1}^{L-1} \exp\left( -\frac{1}{16}  r_{l+1}^2\right)\exp\left( -\frac{k-1}{16}  r_{l+1}^2\right)\\
	& \geq 1 -   \frac{1}{\mu_0(\mathcal{B}_{R_k} ) }\chi\exp\left( -\frac{k-1}{16}  r^2\right),
		\end{align*}
	where the last inequality follows from $r_{l}\geq r$ for all $L \leq l \leq 1$. Finally, by Assumption \ref{assum:initial} we have that, for all $k\geq K$
	\begin{align*}
	\mu^i_{k}(\mathcal{B}_r ) & \geq 1 -  \chi \exp(- \frac{k-1}{16}r^2 + \frac{k-1}{32} r^2)\\
	& = 1 -  \chi \exp(- \frac{k-1}{32}r^2),
	\end{align*}
	or equivalently $\mu^i_{k+1}(\mathcal{B}_r )  \geq 1 -  \chi \exp(- \frac{k}{32}r^2)$.
	\qed
\end{proof}

Analogous to Theorem \ref{main_count}, Theorem \ref{main_cont} provides a probabilistic concentration result for the agents' beliefs around a Hellinger ball of radius $r$ with center at $\theta^*$ for sufficiently large $k$. 

\section{Conclusions}\label{sec_conclusion}
We have proposed an algorithm for distributed learning with both countable and compact sets of hypotheses. Our algorithm may be viewed as a distributed version of Stochastic Mirror Descent applied to the problem of minimizing the sum of Kullback-Leibler divergences. Our results show non-asymptotic geometric convergence rates for the beliefs concentration around the true hypothesis.

It would be interesting to explore how variations on stochastic approximation algorithms will produce new non-Bayesian update rules for more general problems. Promising directions include acceleration results for proximal methods, other Bregman distances or constraints within the space of probability distributions. 

Furthermore we have modeled interactions between agents as exchanges of local  probability distributions (i.e., beliefs) between neighboring nodes in a graph. An interesting open question is to understand to what extent this can be reduced when agents transmit only an approximate summary of their beliefs. We anticipate that future work will additionally consider the effect of parametric approximations allowing nodes to communicate only a finite number of parameters coming from, say, Gaussian Mixture Models or Particle Filters.

%
\bibliographystyle{spbasic} 
\bibliography{IEEEfull,bayes_cons_3}

\end{document}